\theoremstyle{plain}
\newtheorem{theorem}{Theorem}[section]
\newtheorem{lemma}[theorem]{Lemma}
\newtheorem{proposition}[theorem]{Proposition}
\newtheorem{definition}[theorem]{Definition}
\newtheorem{example}[theorem]{Example}
\newtheorem{remark}[theorem]{Remark}
\DeclareMathOperator{\Gal}{Gal}
\DeclareMathOperator{\N}{N}
\DeclareMathOperator{\im}{im}
\newcommand{\CC}{\mathbb{C}}
\newcommand{\QQ}{\mathbb{Q}}
\newcommand{\ZZ}{\mathbb{Z}}
\newcommand{\G}{\mathcal{G}}
\def\bigcapp{\raise1ex\hbox{\rotatebox{180}{$\biguplus$}}}
 \def\bigcappd{\raise1ex\hbox{\rotatebox{180}{$\displaystyle\biguplus$}}}
\begin{document}

\title[]{On a conjecture of Coleman\\
concerning Euler systems}

 \author{David Burns, Alexandre Daoud and Soogil Seo}

\begin{abstract} We prove a distribution-theoretic conjecture of Robert Coleman, thereby also obtaining an explicit description of the complete set of Euler systems for the multiplicative group over 
$\QQ$. 
\end{abstract}

\address{King's College London,
Department of Mathematics,
London WC2R 2LS,
U.K.}
\email{david.burns@kcl.ac.uk}

\address{King's College London,
Department of Mathematics,
London WC2R 2LS,
U.K.}
\email{alexandre.daoud@kcl.ac.uk}

\address{Yonsei University,
Department of Mathematics,
Seoul,
Korea.}
\email{sgseo@yonsei.ac.kr}

\thanks{{\em Mathematics Subject Classification.} Primary: 11R42; Secondary: 11R27.}

\maketitle

\section{Introduction and statement of main result}\label{intro}

The theory of distributions plays a prominent role in number theory research and is strongly influenced by the classical theory of circular numbers in abelian fields (see, for example, the discussion of Kubert and Lang in the Introduction to \cite{kl}). In this article, we shall prove a distribution-theoretic conjecture of Robert Coleman that has a natural 
interpretation in terms of the existence of Euler systems for the multiplicative group $\mathbb{G}_m$.

To be more precise, we write $\QQ^c$ for the algebraic closure of $\QQ$ in $\CC$ and $\mu^\ast$ for the set of non-trivial roots of unity in $\QQ^c$. We then recall that a `circular distribution' is a $\Gal(\QQ^c/\QQ)$-equivariant function $f: \mu^* \to \QQ^{c,\times}$ with the property that 
\begin{equation}\label{dist1} \prod_{\zeta^a = \varepsilon} f(\zeta) = f(\varepsilon)\end{equation}
for all natural numbers $a$ and all elements $\varepsilon$ of $\mu^*$. 

In the late 1980's, Coleman formulated a remarkably explicit conjectural description of the complete set of circular distributions. This conjectural description (which we henceforth refer to as `Coleman's Conjecture') was directly motivated by an archimedean characterization of circular units obtained in \cite{coleman2} and was therefore related to attempts to understand a globalized version of the fact that all norm-compatible families of units in towers of local cyclotomic fields arise by evaluating a Coleman power series at roots of unity, as had earlier been proved by Coleman in \cite{coleman}.

To verify Coleman's Conjecture, we find it convenient to reinterpret the conjecture in terms of a suitable notion of Euler systems. For this purpose we write $\Omega$ for the set of finite abelian extensions of $\QQ$ in $\QQ^c$. For $E$ and $E'$ in $\Omega$ with $E \subseteq E'$ we write ${\N}_{E'/E}$ for the field-theoretic norm map $(E')^\times \to E^\times$. For a Galois extension $E$ of $\QQ$ in $\QQ^c$ we set $\G_E := \Gal(E/\QQ)$. For a rational prime $\ell$ we write $\sigma_\ell$ for the {\em inverse} Frobenius automorphism of $\ell$ on the maximal subextension of $\QQ^c$ in which $\ell$ is unramified. 
 Using this notation, we can now define the Euler systems that are relevant to our purposes.   
 
\begin{definition}\label{es def}
An Euler system for $\mathbb{G}_m$ over $\QQ$ is a collection
$$u=(u_E)_{E  } \in \prod_{E \in \Omega} E^\times $$
with the property that for every $E$ and $E'$ in $\Omega$ with $E \subset E'$ one has
\begin{equation}\label{classical dr}{\N}_{E'/E}(u_{E'})= (u_{E})^{\prod_{\ell} (1-\sigma_\ell)} \end{equation}
in $E^\times$, where in the product $\ell$ runs over the finite set of prime numbers that ramify in $E'$ but not in $E$. We write ${\rm ES}(\mathbb{G}_m)$ for the collection of all such systems.
\end{definition}

\begin{remark}\label{structure rem} {\em The set ${\rm ES}(\mathbb{G}_m)$ is an abelian group under multiplication of systems (so that the component of $u_1u_2$ at $E$ is equal to the product $u_{1,E}u_{2,E}$), with identity element equal to 
the system $u_{\rm triv}$ that has $u_{{\rm triv},E} = 1$ for every $E$ in $\Omega$. This group is also endowed with a natural action of the commutative, non-noetherian, ring 
\[ R := \varprojlim_{E\in \Omega}\ZZ[\G_E],\]
where the transition morphisms in the inverse limit are induced by the natural restriction maps $\ZZ[\G_{E'}] \to \ZZ[\G_E]$ for $E \subset E'$. For $u = (u_E)_E$ in ${\rm ES}(\mathbb{G}_m)$ and $r = (r_E)_E$ in $R$ we write $u^r$ for the system in ${\rm ES}(\mathbb{G}_m)$ that has value $u_E^{r_E}$ at each $E$ in $\Omega$. In a similar way, the set of circular 
distributions also has a natural multiplicative structure as $R$-module.}\end{remark}

%
%
%

\begin{remark}{\em We caution the reader that, whilst we usually use exponential notation to indicate the action of a commutative ring $\Lambda$ on a multiplicative group $U$, as in (\ref{classical dr}) and Remark \ref{structure rem},  we shall also often, for typographic simplicity, use additive notation and hence write either $\lambda(u)$ or $\lambda\cdot u$ in place of $u^\lambda$ for  $\lambda\in\Lambda$ and $u\in U$.}
 \end{remark}

To describe some explicit Euler systems (in the above sense),  we set 
\[ \zeta_n := e^{2\pi i/n} \,\,\text{ and }\,\, \QQ(n) := \QQ(\zeta_n)\subset \QQ^c\]
for each natural number $n$. We also write $m(E)$ for the finite part of the conductor of each field $E$ in $\Omega$ (so that $E \subseteq \QQ(m(E))$).

\begin{example}\label{ex1}{\em For $E$ in $\Omega$ set 
$$c_E :={\N}_{\QQ(m(E))/E}(1-\zeta_{m(E)}) \in E^\times.$$
Then by substituting $X = \zeta_{mn}$ in the polynomial identities 
$1 - X^n = \prod_{a=0}^{a=n-1} (1- \zeta_{mn}^{ma}X)$ for each pair of natural numbers $m$ and $n$ one checks that the `cyclotomic Euler system' 
\[ c := (c_E)_{E\in \Omega}\]
satisfies the distribution relations (\ref{classical dr}) and so    belongs to ${\rm ES}(\mathbb{G}_m)$. This system is known to be directly related to the values of derivatives of Dirichlet $L$-series (cf. \cite[Chap. 3, \S5]{tate}). } \end{example}

\begin{example}\label{ex11}{\em Let $\mathcal{P}$ denote the collection of non-empty subsets of the set of all odd prime numbers. For $\Pi$ in $\mathcal{P}$ and $E$ in $\Omega$ set 
$$u_{\Pi,E} :={\N}_{\QQ(m(E))/E}(-1)^{n_{\Pi,E}} \in \{\pm 1\},$$
with $n_{\Pi,E}$  defined to be $1$ if $m(E)$ is divisible only by primes in $\Pi$ and to be $0$ otherwise. Then an explicit check shows that, for each such set $\Pi$, the `Coleman distribution' 
\[ u_\Pi := (u_{\Pi,E})_{E\in \Omega}\]
satisfies the distribution relations (\ref{classical dr}) and hence defines an element of ${\rm ES}(\mathbb{G}_m)$ of order two. In the sequel we consider the $R$-submodule 
\[ T := R\cdot \{u_\Pi: \Pi \in \mathcal{P}\}\]
of ${\rm ES}(\mathbb{G}_m)$ that comprises all finite products of systems $u_\Pi$ for $\Pi$ in $\mathcal{P}$.}
\end{example}


Having recalled these concrete examples, we can now state an explicit description of the full module ${\rm ES}(\mathbb{G}_m)$.

\begin{theorem}\label{main result} One has ${\rm ES}(\mathbb{G}_m) =  T \oplus Rc$. \end{theorem}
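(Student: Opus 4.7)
The plan is to prove both that the sum $T + Rc$ is direct and that it equals $\mathrm{ES}(\mathbb{G}_m)$. For directness, suppose $c^r = t$ with $r \in R$ and $t \in T$. Then at each level $E \in \Omega$ the element $c_E^{r_E} = \N_{\QQ(m(E))/E}(1-\zeta_{m(E)})^{r_E}$ lies in $\{\pm 1\}$. Applying the archimedean logarithm at each infinite place $v$ of $E$ forces $r_E \cdot \log|c_E|_v = 0$ for every $v$; by the archimedean characterization of circular units of Coleman \cite{coleman2} (which translates the vanishing of all such regulator components into a clean statement about annihilators), $r_E$ must lie in the $\ZZ[\G_E]$-annihilator of $c_E$. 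Consequently $c^r$ is the trivial Euler system, forcing $t$ itself to be trivial.

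For the reverse inclusion, let $u \in \mathrm{ES}(\mathbb{G}_m)$. Propagating the Euler factors $1-\sigma_\ell$ in (\ref{classical dr}) across the tower shows that each $u_E$ is an $S_E$-unit, where $S_E$ collects the primes of $E$ above the rational primes dividing $m(E)$. The divisor $\sum_{\frp \in S_E} v_\frp(u_E) \cdot \frp$ is then compared with the divisor of $c_E$; the inter-level compatibility of these divisors (itself a consequence of the norm relations) allows one to assemble a family $(r_E)_E$ into a coherent element $r \in R$ such that $v := u \cdot c^{-r}$ lies in the global unit group $\cO_E^\times$ at every level. It remains to classify such unit-valued Euler systems: applying the archimedean characterization a second time to $(\log|v_E|)_E$ forces each $v_E$ to be a root of unity in $E$, and the Euler factor twists $1-\sigma_\ell$ subsequently rule out root-of-unity components outside the $\pm 1$-eigenspaces for the action of $\G_E$. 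One then identifies the resulting sign-valued Euler system as an explicit finite product of the Coleman distributions $u_\Pi$, hence an element of $T$.

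The central technical obstacle is expected to be the coherent construction of $r \in R$: although each level-wise divisor determines $r_E$ only modulo $\Ann_{\ZZ[\G_E]}(c_E)$, assembling these local choices into a single element of the non-noetherian inverse limit $R$ requires delicate control over the way the Euler factors $1-\sigma_\ell$ and ramification interact across the cyclotomic tower. The classification of unit-valued Euler systems via \cite{coleman2} should be comparatively more routine, but still requires a careful translation between the Euler system language used here and the distribution-theoretic setup of Coleman's original work.
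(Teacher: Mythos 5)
Your outline correctly identifies the two things that must be shown (directness and coverage), and you are right that the genuinely hard step is the coherent construction of an element $r$ in the non-noetherian ring $R$. But the proposal does not solve that step, and the route you sketch has gaps that would not be straightforward to close.

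On directness: your argument is over-engineered. The paper disposes of $T \cap Rc = \{0\}$ immediately by observing that $T$ is a torsion group (its generators $u_\Pi$ all have order $2$) while $Rc$ is torsion-free by \cite[Th.~2.5]{Seo4}. There is no need to invoke archimedean regulators or annihilators.

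On the reverse inclusion, the central gap is exactly the step you flag: after choosing $r_E \in \ZZ[\G_E]$ level-by-level (each only well-defined modulo the annihilator of $c_E$), you have no mechanism to glue these into a compatible element of $R = \varprojlim_E \ZZ[\G_E]$. The obstruction is real, and the paper devotes essentially all of \S2 to it. The key maneuvers there are: (a) pass to the $(1+\tau)$-component so as to work inside the torsion-free group ${\rm ES}(\mathbb{G}_m)^{1+\tau}$ and reduce to showing that a certain quotient $X = (Rc^{1+\tau}+Rv^{1+\tau})/Rc^{1+\tau}$ vanishes; (b) prove $X$ is torsion-free and that the diagonal map $X \to \prod_p \widehat{X}^p$ is injective (Proposition \ref{divisible prop}) — which itself needs an induction on the number of prime divisors of conductors, using the explicit idempotent description of the annihilator ideal $I_L$ of $c_L^{1+\tau}$ coming from Dirichlet $L$-series; and then (c) handle each $\widehat{X}^p$ by invoking the freeness result \cite[Prop.~5.3(i)]{yonsei}, which in turn rests on the Euler-system machinery of Rubin and the Main Conjecture results of Greither. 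None of this is reproduced or replaced in your sketch.

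Your divisor-matching idea also runs into a structural difficulty: when $m(E)$ is divisible by at least two distinct primes, both $u_E$ and $c_E$ are already global units (this is the content of \cite[Lem.~2.2]{Seo1}, used in Proposition \ref{useful 3}(iii)), so the divisor of $u_E$ is trivial and carries no information. Divisor comparison only constrains $r_E$ at prime-power levels, and propagating from there to composite conductors is precisely where the serious work lies. Finally, the claim that a unit-valued Euler system must be root-of-unity valued is false on its face — cyclotomic units themselves give unit-valued Euler systems at composite levels — and even the refined claim (after subtracting off a cyclotomic part) is not a consequence of an archimedean argument alone; it is equivalent to the deep $p$-adic statements the paper imports from \cite{yonsei}. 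In short, you have located the correct pressure point but have not produced an argument that relieves it, and the paper's proof proceeds by a fundamentally different mechanism (torsion-free $(1+\tau)$-quotients, pro-$p$ completions, and a reduction to a $p$-adic freeness theorem).
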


This is our main result and, as far as we are aware, is the first explicit description of the complete set of Euler systems in any setting. Further, since Theorem \ref{main result} implies, modulo minor $2$-torsion issues, that {\em every} Euler system for $\mathbb{G}_m$ over $\QQ$ is directly related to the cyclotomic Euler system, and hence to Dirichlet $L$-series, it  demonstrates the remarkable strength of distribution relations and thereby perhaps helps to explain the great difficulty that there is to construct Euler systems in natural settings. 

For each odd prime $p$, the result of Theorem \ref{main result} also implies that the pro-$p$ completion of ${\rm ES}(\mathbb{G}_m)$ is generated over the pro-$p$ completion $R_p$ of $R$ by (the image of) the system $c$, and hence validates an analogue for $\mathbb{G}_m$ of the question of whether the $R_p$-module of $p$-adic Euler systems for $\ZZ_p(1)$ is cyclic, as asked by Mazur and Rubin at the end of \cite[\S5.3]{MRkoly}. 

Turning to Coleman's Conjecture, we note that for any circular distribution $f$ there exists a unique Euler system $u_f$ in ${\rm ES}(\mathbb{G}_m)$ with the property that  $u_{f,\QQ(m)} = f(\zeta_m)$ for all $m > 1$ with $m \not\equiv 2$ (mod $4$). The assignment $f \mapsto u_f$ constitutes an isomorphism between the $R$-module of circular distributions and ${\rm ES}(\mathbb{G}_m)$ (for details see the discussion in \cite[\S2.1.1]{yonsei}) and, via this isomorphism, the result of Theorem \ref{main result} can be seen to validate the precise statement of Coleman's Conjecture (as recalled explicitly, for example, in \cite[Conj. 1.1]{yonsei}).    

For the same reason, the description in Theorem \ref{main result} also implies an affirmative answer to the `Guess' formulated by the third author in \cite[\S3]{Seo4}, thereby providing a natural distribution-theoretic analogue of the main result of Coleman in \cite{coleman2}. 

In addition, if $K$ is the maximal real subfield of $\QQ(n)$ for any natural number $n$, then the discussion of the third author in \cite[\S1]{Seo5} shows that Theorem \ref{main result} combines with results of Sinnott \cite{sinnott} on cyclotomic units  to imply that the cardinality of the graded module of `truncated Euler systems' over $K$ that is defined in \cite{Seo5} is equal to the class number of $K$, as is conjectured in loc. cit. 

Finally we note that, in a complementary article, it will be shown that Theorem \ref{main result} gives concrete information about the structure over (the non-noetherian ring) $R$ of the Selmer group of $\mathbb{G}_m$ over the abelian closure of $\QQ$. In this regard, we recall that the latter Selmer group is a classical object in number theory that encodes information about the Galois structures of the ideal class group and unit group of every abelian field. 

\begin{remark}{\em Following Coleman, a circular distribution $f$ is said to be `strict' if for all natural numbers $n$ and all primes $\ell$ that do not divide $n$ it satisfies the congruence relation 
\[ f(\zeta_\ell\zeta_n) \equiv f(\zeta_n) \,\,\text{ modulo all primes above $\ell$.}\]
Such congruence relations also arise naturally in the theory of Euler systems (see, for example, the discussion in \cite{RL}). The collection of strict circular distributions corresponds (under the isomorphism discussed above) to the $R$-submodule ${\rm ES}(\mathbb{G}_m)^{\rm str}$ of ${\rm ES}(\mathbb{G}_m)$ comprising systems $u = (u_E)_{E\in \Omega}$ with the property that for all fields $E \subset E'$ for which $m(E') = \ell\cdot m(E)$ with $\ell$ a prime that does not divide $m(E)$ one has $u_{E'} \equiv u^{\sigma_\ell}_{E} \,$ modulo all primes above $\ell$. With $u_{\rm odd}$ denoting the Coleman distribution corresponding (via the discussion in Example \ref{ex11}) to the set $\Pi_{\rm odd}$ of all odd primes, it follows as an easy consequence of Theorem \ref{main result} that 
\[ {\rm ES}(\mathbb{G}_m)^{\rm str} = \{u_{\rm triv}, u_{\rm odd}\} \oplus Rc.\] } \end{remark} 

\begin{remark}{\em In \cite{coates} Coates introduced an analogue of the notion of circular distributions in the setting of abelian extensions of imaginary quadratic fields and it seems likely the methods used here could be further developed in order to prove an analogue of Theorem \ref{main result} in that setting.}\end{remark}  


\section{The proof of Theorem \ref{main result}}\label{es q sec}

Our proof of Theorem \ref{main result} will follow a general approach to Coleman's Conjecture that was developed by the first and third authors in \cite{yonsei}.  

In contrast to this earlier approach, however, we focus here on the study of individual Euler systems rather than on families of Euler systems and are thereby able to reduce verification of the conjecture to a natural $p$-adic problem for every prime $p$. This key reduction is explained in \S\ref{pro p reduction} and relies critically both on aspects of the Galois structure of modules of cyclotomic elements (that follow from the link between cyclotomic elements and Dirichlet $L$-series) and the fact that the Euler systems that are relevant to Coleman's Conjecture have components in abelian fields of arbitrary conductor. 

Having made this reduction, the individual $p$-adic problems are then resolved by combining a 
delicate analysis of pro-$p$ completions together with results from \cite{yonsei} which themselves rely on detailed properties of Euler systems that are established by Rubin in \cite{R} and by Greither in \cite{greither}. 

In the sequel, we write $A_{\rm tor}$ for the torsion subgroup of an abelian group $A$.

\subsection{Initial observations}\label{initial section} At the outset we recall it is proved by the third author in \cite[Th. 2.5]{Seo4} that the group $Rc$ is torsion-free and hence that $T\cap Rc = \{0\}$. 

To prove Theorem \ref{main result} it is therefore enough for us to show that each system $v$ in ${\rm ES}(\mathbb{G}_m)$ belongs to $T + Rc$. Our proof of this fact (for a system $v$ that is henceforth regarded as fixed) will occupy the remainder of this article. 

In this first section we make several useful deductions from results of \cite{yonsei}. To do this we write $\tau$ for the element of $\G_{\QQ^c}$ induced by complex conjugation and then define $R$-modules   
\[ C := Rc^{1+\tau},\,\,Y:= Rv^{1+\tau} \,\,\text{ and }\,\, X:= (C+Y)/C.\]

The following result shows that Theorem \ref{main result} is equivalent to asserting that the module $X$ vanishes.

\begin{lemma}\label{injectivity}  There exists a canonical exact sequence  
\begin{equation*} 0 \to T + Rc \xrightarrow{\subset} T + Rc + Rv \xrightarrow{t} X\to 0\end{equation*}
\end{lemma}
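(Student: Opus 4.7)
The plan is to define $t\colon T + Rc + Rv \to X$ by $t(w) := w^{1+\tau} + C$, and then to verify each piece of the claimed exactness. For a decomposition $w = s + rc + r'v$ with $s \in T$ and $r, r' \in R$, one has $s^{1+\tau} = 0$ (since every element of $T$ takes values in $\{\pm 1\}$, which are fixed by complex conjugation), $(rc)^{1+\tau} = r(1+\tau)c \in C$ by the definition of $C$, and $(r'v)^{1+\tau} = r'(1+\tau)v \in Y$ by the definition of $Y$. Hence $w^{1+\tau} \in C + Y$, and its class in $X = (C+Y)/C$ is independent of the chosen decomposition of $w$: if $w = s_1 + r_1 c + r'_1 v = s_2 + r_2 c + r'_2 v$, then $(1+\tau)(r'_1 - r'_2)v = -(1+\tau)(r_1 - r_2)c \in C$.

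From these identities it is immediate that $T + Rc \subseteq \ker(t)$, and $t$ is surjective because every element of $Y$ is of the form $(1+\tau)(r'v)$, which is equal to $t(r'v)$ modulo $C$. The substantive part of the lemma is therefore the reverse inclusion $\ker(t) \subseteq T + Rc$. If $w = s + rc + r'v$ satisfies $t(w) = 0$, then the computation above yields $(1+\tau)r'v \in C$, so $(1+\tau)r'v = (1+\tau)r''c$ for some $r'' \in R$, and hence $r'v - r''c$ is annihilated by $1+\tau$. To conclude, it would then suffice to establish the auxiliary structural fact that any element of $Rc + Rv$ annihilated by $1+\tau$ already lies in $T$; granted this, $r'v \in T + Rc$ and hence $w \in T + Rc$.

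The main obstacle is precisely this auxiliary fact. Any $\alpha \in \mathrm{ES}(\mathbb{G}_m)$ with $\alpha^{1+\tau} = 0$ must satisfy $\alpha_E \in \{\pm 1\}$ at every real abelian field $E$ and $\mathrm{N}_{E/E^+}(\alpha_E) = 1$ at every CM field $E$. I would proceed by first matching $\alpha$ with a candidate product of Coleman distributions $u_\Pi$ at the level of real subfields, then combining the Euler-system norm relations with the torsion-freeness of $Rc$ from \cite[Th.~2.5]{Seo4} to extend the agreement to all CM fields in $\Omega$. Controlling how much slack the norm relations leave at CM fields once the signs on real fields have been pinned down will be the hardest step, and it is where the interplay between the ring-theoretic structure of $R$ and the arithmetic of cyclotomic towers enters in an essential way.
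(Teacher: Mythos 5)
The first two-thirds of your argument (the definition of $t$, the verification that it is well defined, that $T+Rc\subseteq\ker(t)$, and surjectivity) match the paper's proof. The problem is the final step, which you correctly identify as the substantive content, but which you leave unproved and, more seriously, misstate.

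Your auxiliary claim --- that any element of $Rc+Rv$ annihilated by $1+\tau$ already lies in $T$ --- is \emph{false}. Consider $c^{1-\tau}$. Since $(1-\tau)(1+\tau)=1-\tau^2=0$, this element is killed by $1+\tau$; on the other hand its component at $\QQ(m)$ is $(1-\zeta_m)/(1-\zeta_m^{-1})=-\zeta_m$, a root of unity whose order grows with $m$, so $c^{1-\tau}$ is a nontrivial element of $Rc$ and hence (by the torsion-freeness of $Rc$ recalled at the start of \S\ref{initial section}) is \emph{not} torsion, i.e.\ not in $T$. The correct statement, and the one the paper actually invokes, is weaker: if $u\in{\rm ES}(\mathbb{G}_m)$ satisfies $u^{1+\tau}=1$, then $u\in{\rm ES}(\mathbb{G}_m)_{\rm tor}+Rc^{1-\tau}$ (this is \cite[Th.~4.1(i)]{yonsei}), combined with ${\rm ES}(\mathbb{G}_m)_{\rm tor}=T$ from \cite[Th.~B]{Seo3}. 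Note that this weaker containment still suffices for the lemma, since the extra $Rc^{1-\tau}$ term gets absorbed into $Rc$: in your notation, $r'v-r''c\in T+Rc^{1-\tau}$ forces $r'v\in T+Rc$, and hence $w\in T+Rc$, exactly as required.

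Finally, the programme you sketch for proving the auxiliary fact (matching signs with Coleman distributions on real subfields, then propagating to CM fields via norm relations) is a reasonable heuristic, but it is not a proof, and you say as much. These are genuine results with substantial proofs in the cited references, not routine verifications; a complete argument here must either reprove them or cite them, as the paper does.
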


\begin{proof} Since $y^{1+\tau} = 0$ for every $y\in T$ one has $z^{1+\tau}\in C+Y$ for each $z \in T + Rc + Rv$ and so one obtains a well-defined surjective homomorphism of $R$-modules $t$ by sending each $z$ to the class of $z^{1+\tau}$ in $X$. 

With this definition of $t$, it is clear $T+Rc$ is contained in $\ker(t)$ and hence enough to show that if $t(z)=0$, then $z$ belongs to $T + Rc$. 
 
Now if $t(z) = 0$, then there exists an element $r$ of $R$ such that $z^{1+\tau} = (c^{1+\tau})^r$. It follows that $(zc^{-r})^{1+\tau} = 1$ and hence, by \cite[Th. 4.1(i)]{yonsei}, that $zc^{-r}$ belongs to ${\rm ES}(\mathbb{G}_m)_{\rm tor} + Rc^{1-\tau}$. Since this implies $z$ belongs to ${\rm ES}(\mathbb{G}_m)_{\rm tor} + Rc$, it is therefore enough to note that in \cite[Th. B]{Seo3} the third author has proved ${\rm ES}(\mathbb{G}_m)_{\rm tor}$ is equal to $T$. \end{proof}  

The following property of $X$ will also play a key role in the sequel. 

\begin{lemma}\label{tf lemma} $X$ is torsion-free. \end{lemma}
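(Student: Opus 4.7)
The plan is as follows. Suppose $x \in X$ satisfies $nx = 0$ for some positive integer $n$; I would aim to show that $x = 0$. Writing $x$ as the class of some $rv^{1+\tau}$ with $r \in R$, the hypothesis gives $s \in R$ with $nrv^{1+\tau} = sc^{1+\tau}$ in ${\rm ES}(\mathbb{G}_m)$. Following the template of Lemma \ref{injectivity}, the element $z := nrv - sc$ then satisfies $z^{1+\tau} = 0$, so \cite[Th.~4.1(i)]{yonsei} would give $z = t_0 + r_0 c^{1-\tau}$ with $t_0 \in T$ and $r_0 \in R$. Setting $s' := s + r_0(1-\tau)$, this rearranges to the identity
\[
  nrv \,=\, t_0 + s' c \qquad \text{in } {\rm ES}(\mathbb{G}_m).
\]

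The key step I would then need is the divisibility statement that $s'c$ lies in $n\cdot Rc$, equivalently that $s'$ lies in $nR + \Ann_R(c)$. Granted this, we can choose $s'' \in R$ with $s'c = n s''c$; then $n(rv - s''c) = nrv - n s''c = t_0$, and since every element of $T$ has order dividing $2$ this forces $2n(rv - s''c) = 0$. Hence $rv - s''c$ is torsion in ${\rm ES}(\mathbb{G}_m)$, and so lies in $T$ by \cite[Th.~B]{Seo3}. Therefore $rv \in T + Rc$, and applying $1+\tau$ (which annihilates $T$) yields $rv^{1+\tau} = s''c^{1+\tau} \in C$, so $x = 0$, as required.

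The main obstacle is thus the divisibility claim $s'c \in nRc$. An auxiliary observation I would use is that ${\rm ES}(\mathbb{G}_m)^{1+\tau}$ is itself $\ZZ$-torsion-free: at each level $E$, the value $u_E\tau(u_E)$ is positive under every archimedean embedding and hence cannot equal $-1$, which, together with the torsion-freeness of $Rc$ from \cite[Th.~2.5]{Seo4}, implies that $\Ann_R(c)$ and $\Ann_R(c^{1+\tau})$ are both pure ideals of $R$. However, neither purity statement on its own yields that the specific element $s'c$ belongs to $nRc$, since $C$ is not known a priori to be a pure submodule of ${\rm ES}(\mathbb{G}_m)^{1+\tau}$. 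To close the gap, my plan is to work level by level: taking valuations of both sides of the displayed identity at primes of $E^+$ above the rational primes dividing $m(E)$, one would extract congruences on the augmentations (and more generally on the Galois-equivariant $\chi$-components) of $s'_E$ modulo $n$, using the known valuations of cyclotomic elements. Patching these local congruences into a single $s'' \in R$ will require the global rigidity imposed by the Euler-system norm relations satisfied by $v$ and $c$, and is likely to rely on the detailed Galois-structural results on cyclotomic units that underpin \cite[Th.~4.1]{yonsei} (and ultimately on \cite{R} and \cite{greither}). This is the hardest part of the proof.
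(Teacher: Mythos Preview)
Your reduction is logically sound up to the ``divisibility claim'' $s'c \in nRc$, but that claim is where the entire content of the lemma resides, and in fact (in the form you actually need) it is equivalent to the statement you are trying to prove. Applying $1+\tau$ to your identity $nrv = t_0 + s'c$ gives $s'c^{1+\tau} = nrv^{1+\tau}$, so what you really need is that $s'c^{1+\tau} \in nC$; and since $C+Y \subseteq {\rm ES}(\mathbb{G}_m)^{1+\tau}$ is $\ZZ$-torsion-free, this is precisely the assertion that $C$ is pure in $C+Y$, i.e.\ that $X = (C+Y)/C$ is torsion-free. So the detour through \cite[Th.~4.1(i)]{yonsei} and \cite[Th.~B]{Seo3} has not gained any ground: you have restated the problem rather than reduced it. Your proposed level-by-level valuation argument is not carried out, and there is no reason to expect it to succeed without an input of comparable strength to the one you are trying to avoid.

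The paper's proof is entirely different and much shorter: it observes that $X$ is a subgroup of the larger quotient $Q := {\rm ES}(\mathbb{G}_m)^{1+\tau}/Rc^{1+\tau}$, and then invokes \cite[Th.~5.1(i),(ii)]{yonsei} to conclude that, for every prime $p$, the group $Q$ embeds into a uniquely $p$-divisible group and is therefore torsion-free. In other words, the torsion-freeness of $X$ is imported wholesale from a structural result about the full module of Euler systems already established in \cite{yonsei}, rather than being proved from scratch for the particular system $v$.
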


\begin{proof} The module $X$ identifies with a subgroup of the quotient $Q$ of ${\rm ES}(\mathbb{G}_m)^{1+\tau}$ by $Rc^{1+\tau}$. In addition, one knows that $Q$ is torsion-free since, for every prime $p$, it is isomorphic to a subgroup of a uniquely $p$-divisible group, as a direct consequence of claims (i) and (ii) of \cite[Th. 5.1]{yonsei}. \end{proof} 

\subsection{Annihilators of cyclotomic units}\label{ann cyclo section} In this section we prove some useful technical results concerning the Galois structure of modules generated by Euler systems. 

We write $\#X$ for the cardinality of a finite set $X$. If $\Gamma$ is a finite group, then we write $e_\Gamma$ for the idempotent $e_\Gamma := \#\Gamma^{-1}\cdot \sum_{\gamma \in \Gamma}\gamma$ of $\QQ[\Gamma]$, and for each homomorphism $\chi: \Gamma \to \QQ^{c,\times}$ we write $e_\chi$ for the primitive idempotent $(\#\Gamma)^{-1}\sum_{\gamma\in \Gamma}\chi(\gamma^{-1})\gamma$ of $\QQ^c[\Gamma]$.

For each field $L$ in $\Omega$ we write $L^+$ for its maximal real subfield and set $\G_L^+ := \G_{L^+}$. We then define an ideal of $\ZZ[\G_L^+]$ by setting 
\[ I_L := \{ r \in \ZZ[\G_L^+]: r(c^{1+\tau}_L) = 0\}.\]

In the following result we shall describe explicitly this annihilator ideal in terms of the idempotent of $\QQ[\G_L^+]$ that is obtained by setting
\begin{equation}\label{n idem} e_L := \begin{cases} 1, &\text{if $m(L)$ is a prime power,}\\
                        \prod_{\ell \mid m(L)}(1-e_{D_{L,\ell}}), &\text{otherwise,}\end{cases}\end{equation}
where in the product $\ell$ runs over all prime divisors of $m(L)$ and $D_{L,\ell}$ denotes the decomposition subgroup of $\ell$ in $\G^+_L$.


\begin{proposition}\label{useful 3} For every field $L$ in $\Omega$ the following claims are valid. 

\begin{itemize}
\item[(i)] $I_L$ is equal to the set $\{ x\in \ZZ[\G^+_L] \, \mid \, e_L\cdot x =0\}$.
\item[(ii)] If $\psi: \G^+_L \to \QQ^{c,\times}$ is any homomorphism such that $e_\psi e_L = 0$, then $m(L)$ is not a prime power and $\psi$ is trivial on the decomposition group in $\G_L^+$ of at least one prime divisor of $m(L)$.
\item[(iii)] If $u$ belongs to ${\rm ES}(\mathbb{G}_m)^{1+\tau}$, then the image of $u_L$ in $\QQ\otimes_\ZZ L^\times$ belongs to the $\QQ[\G^+_L]$-module generated by $c^{1+\tau}_L$. 
\end{itemize}
\end{proposition}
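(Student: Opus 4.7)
I would handle the three parts in the order (ii), (i), (iii), with (iii) leveraging both of the earlier parts.

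Part (ii) is a direct computation with central idempotents of $\QQ^c[\G_L^+]$. For any irreducible character $\psi$ of $\G_L^+$ and any subgroup $H \leq \G_L^+$, $e_\psi e_H$ equals $e_\psi$ or $0$ according as $\psi|_H$ is trivial or not; hence $e_\psi(1 - e_{D_{L,\ell}}) = 0$ iff $\psi|_{D_{L,\ell}} = 1$. Combining the factors in (\ref{n idem}), the product $e_\psi e_L$ vanishes iff $\psi$ is trivial on some $D_{L,\ell}$ with $\ell \mid m(L)$. When $m(L)$ is a prime power $e_L = 1$, so $e_\psi e_L = e_\psi \neq 0$, which forces $m(L)$ not to be a prime power in the hypothesis of (ii).

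For part (i), I would use the classical Dirichlet-regulator / $L$-value description of cyclotomic units: for each character $\psi$ of $\G_L^+$, the element $e_\psi c_L^{1+\tau}$ is, up to a nonzero $\QQ^c$-scalar, equal to $L(1, \bar\psi) \cdot \prod_{\ell \mid m(L),\, \ell \nmid f_\psi}(1 - \psi^{-1}(\sigma_\ell))$, where $f_\psi$ denotes the conductor of $\psi$. Since $L(1, \bar\psi) \neq 0$ and the Euler-factor product vanishes precisely when $\psi(\sigma_\ell) = 1$ for some such $\ell$, part (ii) gives $e_\psi c_L^{1+\tau} \neq 0 \iff e_\psi e_L \neq 0$. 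Hence $\Ann_{\QQ[\G_L^+]}(c_L^{1+\tau}) = (1-e_L)\QQ[\G_L^+]$, and since $c_L^{1+\tau}$ is easily verified to be non-torsion in $L^{+,\times}$, intersecting with $\ZZ[\G_L^+]$ yields $I_L = \{x \in \ZZ[\G_L^+] : e_L x = 0\}$.

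Part (iii) is verified character-by-character. By (i) the span $\QQ[\G_L^+] c_L^{1+\tau}$ in $\QQ \otimes L^{+,\times}$ decomposes as $\bigoplus_{\psi \,:\, e_\psi e_L \neq 0} \QQ^c \cdot e_\psi c_L^{1+\tau}$, so it suffices to show (a) $e_\psi u_L = 0$ for every $\psi$ with $e_\psi e_L = 0$, and (b) $e_\psi u_L \in \QQ^c \cdot e_\psi c_L^{1+\tau}$ otherwise. For (a), pick $\ell_0 \mid m(L)$ with $\psi|_{D_{L,\ell_0}} = 1$ and let $L'$ be the fixed field of $D_{L,\ell_0}$ in $L^+$; then $\ell_0$ splits completely in $L'$ and $\sigma_{\ell_0}|_{L'}$ is trivial. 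Since $u$ is $\tau$-fixed we have $u_{L^+} = u_L^{[L:L^+]}$, and applying (\ref{classical dr}) to the pair $L' \subset L^+$ yields $N_{L^+/L'}(u_L)^{[L:L^+]} = u_{L'}^{\prod_\ell(1-\sigma_\ell)}$ with $\ell_0$ in the indexing set; the factor $(1-\sigma_{\ell_0})$ then kills the right-hand side. Hence $N_{L^+/L'}(u_L)$ is torsion in $L'^{,\times}$ and vanishes in $\QQ^c \otimes L^{+,\times}$, so the identity $e_\psi u_L = e_\psi e_{D_{L,\ell_0}} u_L = [L^+:L']^{-1} e_\psi N_{L^+/L'}(u_L)$ forces $e_\psi u_L = 0$. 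For (b), a parallel application of the distribution relations at primes $\ell \nmid m(L)$, using suitable auxiliary fields $L \cdot \QQ(\zeta_{\ell^k})$, restricts the divisor of $u_L$ to primes above $m(L)$; since $\psi|_{D_{L^+,\ell}} \neq 1$ for every $\ell \mid m(L)$, the $\psi$-isotypic component of the induced representation $\bigoplus_{\ell \mid m(L)} \mathrm{Ind}^{\G_L^+}_{D_{L^+,\ell}} \QQ^c$ vanishes, placing $e_\psi u_L$ into $e_\psi(\QQ^c \otimes \mathcal{O}_{L^+}^\times)$. By Dirichlet this space is one-dimensional for $\psi \neq 1$ and spanned by $e_\psi c_L^{1+\tau}$, giving the containment.

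The main technical obstacle is the divisor-control step in (iii)(b): rigorously ruling out divisor contributions to $u_L$ from primes outside $m(L)$ requires an iterated use of the distribution relations across multiple towers of auxiliary fields, with additional care at the trivial character and at boundary cases where the decomposition field $L'$ degenerates to $\QQ$.
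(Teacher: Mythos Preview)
Your overall architecture matches the paper's: part (ii) is the same idempotent computation, part (i) is the $L$-value argument the paper defers to \cite[Lem.~2.4]{yonsei}, and part (iii) proceeds character-by-character by first establishing $e_L$-stability of $\iota(u_L)$ and then invoking a one-dimensionality count via the Dirichlet regulator. The $S$-unit property you flag as the ``main technical obstacle'' in (iii)(b) is not proved in the paper either; it is imported from \cite[Lem.~2.2]{Seo1}.

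There is, however, a genuine slip in your argument for (iii)(a). The identity $u_{L^+}=u_L^{[L:L^+]}$ is false in general: writing $u=w^{1+\tau}$ one has $u_L=\N_{L/L^+}(w_L)$ when $L\neq L^+$, and the distribution relation gives $\N_{L/L^+}(w_L)=w_{L^+}^{\prod_\ell(1-\sigma_\ell)}$ with $\ell$ running over primes ramified in $L$ but not in $L^+$. That product is not always empty even when $m(L)$ has several prime factors (e.g.\ take the degree-$4$ subfield of $\QQ(\zeta_{15})$ in which the inertia group at $3$ equals $\{1,\tau\}$; then $3\mid m(L)$ but $3\nmid m(L^+)$), so your displayed relation for $\N_{L^+/L'}(u_L)^{[L:L^+]}$ is not what you wrote. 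The repair is easy---compose the two norm steps and note that the factor $(1-\sigma_{\ell_0})$ still occurs and still annihilates everything---but the paper sidesteps the issue entirely by working with $w$ rather than $u$ and descending in one step from $L$ to the field $L_\psi$ cut out by $\psi\circ\pi$. That single descent produces all the Euler factors $\prod_{\ell\in\mathcal{P}_\psi}(1-\psi(\sigma_\ell))$ at once and treats the primes ramified in $L_\psi$ and those unramified in $L_\psi$ uniformly, which is cleaner than isolating one $\ell_0$ and routing through $L^+$.
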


\begin{proof} Claim (i) is proved in \cite[Lem. 2.4]{yonsei} and relies on the fundamental link  between cyclotomic elements and first derivatives of Dirichlet $L$-series (as discussed, for example, in \cite[Chap. 3, \S5]{tate}). 

Claim (ii) follows directly from the explicit description (\ref{n idem}) of $e_L$ and the fact that for each subgroup $H$ of $\G^+_L$ one has $e_\psi(1-e_H)   = 0$ if $\psi$ is trivial on $H$ and $e_\psi(1-e_H) = e_\psi$ otherwise.
  
To prove claim (iii) we use the fact that the natural map $\iota: L^\times \to \QQ^c\otimes_\ZZ L^\times$ is injective on the torsion-free subgroup $(L^\times)^{1+\tau}$ of $L^{\times}$. We write $u = w^{1+\tau}$ with $w \in {\rm ES}(\mathbb{G}_m)$ and claim first that the image of $u_L = w_L^{1+\tau}$ under $\iota$ is stable under multiplication by $e_L$.
 In view of claim (ii), to show this it is enough to prove for every homomorphism $\psi: \G_L^+ \to \QQ^{c,\times}$ that if $e_\psi(\iota(u_L))\not= 0$, then $\psi$ cannot be trivial on the decomposition group of any prime that ramifies in $L$ (and so $e_\psi e_L = e_\psi$). 
 
 To see this, we write $\pi$ for the restriction map $\G_L \to \G_L^+$ and then note that, for each homomorphism $\psi: \G_L^+\to \QQ^{c,\times}$, one has 
\begin{align*} e_\psi(\iota(u_L)) =&\, e_{\psi\circ\pi}(\iota(w_L)^{1+\tau})\\
 =&\, 2\cdot e_{\psi\circ\pi}(\iota(w_L))
 \\ =&\, 2\cdot\left(\prod_{\ell\in \mathcal{P}_\psi} (1-\psi(\sigma_{\ell,L_\psi}))\right)e_{\psi\circ\pi}(\iota(w_{L_\psi}))\\
 =&\, \left(\prod_{\ell\in \mathcal{P}_\psi} (1-\psi(\sigma_{\ell,L_\psi}))\right)e_{\psi}(\iota(u_{L_\psi})).
 \end{align*}
Here $L_\psi$ denotes the subfield of $L$ fixed by $\ker(\psi\circ\pi)$ (or equivalently, the subfield of $L^+$ fixed by $\ker(\psi)$), $\mathcal{P}_\psi$ is the set of primes that ramify in $L$ but not in $L_\psi$ and for each $\ell$ in $\mathcal{P}_\psi$ we write $\sigma_{\ell,L_\psi}$ for the image of $\sigma_\ell$ in $\G_{L_\psi}$. In addition, the first of the equalities is clear, the second and fourth are true since the image of $\tau$ in $\G_L$ is contained in 
$\ker(\psi\circ\pi)$, and the third equality is true since the system $w$ validates the distribution relation (\ref{classical dr}).  

From the above equalities it is clear that, if $e_\psi(\iota(u_L))\not= 0$, then $\ker(\psi)$ cannot contain $\sigma_{\ell,L_\psi}$ for any $\ell$ in $\mathcal{P}_\psi$. On the other hand, any prime $\ell$ that ramifies in $L$ but does not belong to $\mathcal{P}_\psi$ is ramified in $L_\psi$ and so its inertia group in $\G_L^+$ is not contained in $\ker(\psi)$. Hence, if $e_\psi(\iota (u_L))\not= 0$, then $\psi$ cannot be trivial on the decomposition group in $\G_L^+$ of any prime that ramifies in $L$, as required.
 
To proceed we write $S(L)$ for the set of archimedean places of $L$, respectively the set of places of $L$ that are either archimedean or $p$-adic, if $m(L)$ is divisible by two distinct primes, respectively if $m(L)$ is a power of a prime $p$. We  then write $U_L'$ for the (finitely generated) subgroup of $L^\times$ comprising elements that are units at all places of $L$ outside $S(L)$ and $X_L'$ for the subgroup of the free abelian group on $S(L)$ comprising elements whose coefficients sum to zero. 

Then we recall that the distribution relation (\ref{classical dr}) implies $w_L$ belongs to $U_L'$ (for a proof of this fact see, for example, \cite[Lem. 2.2]{Seo1}) and hence, in view of the above argument, that $\iota(u_L)$ belongs to $e_L(\QQ\otimes_{\ZZ}(U_L')^{1+\tau})$. 
 
To prove claim (iii) it is thus enough to show that the $\QQ[\G^+_L]$-module $e_L (\QQ\otimes_{\ZZ}(U_L')^{1+\tau})$ is generated by $c^{1+\tau}_L$. But this is true since if $\psi$ is any homomorphism $\G_L^+ \to \QQ^{c,\times}$ for which $e_\psi e_L \not= 0$, then claim (i) combines with the observation that $c_L$ belongs to $U_L'$ to imply that $e_\psi(\iota(c^{1+\tau}_L))$ is a non-zero element of $e_\psi(\QQ^c\otimes_\ZZ (U_L')^{1+\tau})$, whilst one also knows that  
\[ {\rm dim}_{\QQ^c}\bigl(e_\psi(\QQ^c\otimes_\ZZ (U_L')^{1+\tau})) = {\rm dim}_{\QQ^c}\bigl(e_{\psi\circ\pi}(\QQ^c\otimes_\ZZ X_L')) = 1,\]
where the first equality is true since the Dirichlet Regulator map induces an isomorphism of $\CC[\G_L]$-modules $\CC\otimes_\ZZ U_L' \cong \CC\otimes_\ZZ X_L'$ (cf. \cite[Chap. I, \S4.2]{tate}) and the second follows by a straightforward computation from the definition of $X_L'$. 
\end{proof}

\subsection{The reduction of Theorem \ref{main result} to $p$-primary considerations}\label{pro p reduction} By the discussion in \S\ref{initial section}, the proof of Theorem \ref{main result} is reduced to showing that the group $X$ vanishes. In this section we reduce the vanishing of $X$ to a family of $p$-primary problems.  

\subsubsection{}For an abelian group $A$ we set 
\[ \widehat A := \varprojlim_{n\in \mathbb{N}} A/nA \,\,\text{ and }\,\, \widehat{A}^p := \varprojlim_{m\in \mathbb{N}} A/p^mA\] 
for each prime $p$, where all limits are taken with respect to the natural projection maps (and, for typographic simplicity, we sometimes write ${A}^{\wedge,p}$ in place of $\widehat{A}^p$). We use similar notation for homomorphisms of abelian groups. We also note that if $A$ is finitely generated, then $\widehat A$ and $\widehat{A}^p$ respectively identify with the tensor products $A\otimes_\ZZ {\widehat{\ZZ}}$ and $A\otimes_\ZZ \ZZ_p$.

The following result records some elementary properties of the functors $A \mapsto \widehat{A}$ and $A \mapsto \widehat{A}^p$ that will be useful in the sequel. 

\begin{lemma}\label{pro-completion lemma} If $A$ is a torsion-free abelian group, then the following claims are valid.
\begin{itemize}
\item[(i)] If $0 \to A_1 \xrightarrow{\theta} A_2 \xrightarrow{\phi} A \to 0$ is an exact sequence of abelian groups, then  the induced sequences $0 \to \widehat{A_1} \xrightarrow{\widehat{\theta}} \widehat{A_2} \xrightarrow{\widehat{\phi}} \widehat{A} \to 0$, and $0 \to \widehat{A_1}^p \xrightarrow{\widehat{\theta}^p} \widehat{A_2}^p \xrightarrow{\widehat{\phi}^p} \widehat{A}^p \to 0$ for each prime $p$, are also exact. 
\item[(ii)] The groups $\widehat{A}$, and $\widehat{A}^p$ for each prime $p$, are torsion-free. 
\item[(iii)] For each prime $p$, the natural map $\widehat{A}^p \to (\widehat{A}^p)^{\wedge,p}$ is bijective.
\end{itemize}
\end{lemma}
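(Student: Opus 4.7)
The plan is to treat all three claims in parallel, with torsion-freeness of $A$ entering only through the observation that multiplication by any positive integer $n$ is injective on $A$. Throughout, I shall write $A[n]$ for the $n$-torsion subgroup of an abelian group $A$.

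For (i), the strategy is first to establish short-exactness of
\[ 0 \to A_1/nA_1 \to A_2/nA_2 \to A/nA \to 0 \]
for each $n \geq 1$ (and analogously for each $n = p^m$) by applying the snake lemma to the vertical multiplication-by-$n$ endomorphism of the given short exact sequence. Torsion-freeness of $A$ gives $A[n] = 0$, which kills the connecting map and produces the needed left-exactness on quotients. To pass to inverse limits, I would then invoke the Mittag-Leffler criterion, which is satisfied because the transition maps $A_i/mnA_i \twoheadrightarrow A_i/nA_i$ (respectively $A_i/p^{m+1}A_i \twoheadrightarrow A_i/p^m A_i$) are surjective quotient maps. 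The derived functor $\varprojlim^1$ therefore vanishes on the leftmost system, and the displayed sequences survive passage to $\widehat{A_i}$ and $\widehat{A_i}^p$.

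For (ii), I would prove directly that multiplication by each positive integer is injective on $\widehat{A}$ (respectively that each power of $p$ is injective on $\widehat{A}^p$, noting that integers coprime to $p$ act invertibly there). Given a coherent sequence $\alpha = (a_k)_k \in \widehat{A}$ with $n\alpha = 0$, so that $na_k \in kA$ for every $k$, I would specialise to indices of the form $nm$: torsion-freeness upgrades $na_{nm} \in nmA$ to $a_{nm} \in mA$, after which the coherence relation $a_m \equiv a_{nm} \pmod{mA}$ forces $a_m \in mA$ for every $m$, hence $\alpha = 0$. The pro-$p$ argument against $p^a$-torsion is entirely analogous, using indices of the form $m+a$ and the relation $a_m \equiv a_{m+a} \pmod{p^m A}$.

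For (iii), I would show that the natural projection $\widehat{A}^p \twoheadrightarrow A/p^m A$ induces an isomorphism $\widehat{A}^p / p^m \widehat{A}^p \cong A/p^m A$ for each $m$, after which passage to the inverse limit (again with visibly surjective transitions) delivers the claim. Surjectivity is immediate from $A \twoheadrightarrow A/p^m A$, and the inclusion $p^m \widehat{A}^p \subseteq \ker$ is tautological; the main content is the reverse inclusion. Given $\alpha = (a_k)_k$ with $a_m \in p^m A$, coherence first propagates $p^m$-divisibility to every later index $k \geq m$; torsion-freeness of $A$ then lets one write $a_{k+m} = p^m b_k$ for a unique $b_k \in A$ for each $k \geq 0$, and one checks that $(b_k)_k$ defines a coherent element $\beta \in \widehat{A}^p$ (with coherence of $(b_k)$ extracted from that of $(a_{k+m})$ by torsion-free cancellation) whose $p^m$-multiple recovers $\alpha$ via the coherence of $\alpha$ itself. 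The main obstacle I anticipate is precisely this bookkeeping in (iii): the index shift is essential because $a_m \in p^m A$ does not by itself exhibit $\alpha$ as $p^m$ times a coherent family, and one has to align the successive torsion-free divisions carefully with the coherence conditions at each level.
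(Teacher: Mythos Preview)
Your proposal is correct and follows essentially the same route as the paper. For (i) and (ii) your argument is identical to the paper's (snake lemma plus Mittag--Leffler for (i); the index-shift $n \mapsto nm$ trick for (ii)); for (iii) the paper does not actually give a proof but merely remarks that the claim ``is both straightforward to prove directly and also follows immediately from the general result [Matlis, Th.~15]'', so your direct argument via $\widehat{A}^p/p^m\widehat{A}^p \cong A/p^mA$ is exactly the elementary verification the paper alludes to.
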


\begin{proof} For both claims (i) and (ii), it it enough to consider the functor $A \to \widehat{A}$. 

To prove claim (i) in this case we note first that, since $A$ is torsion-free, for each natural number $n$ the Snake Lemma applies to the following exact commutative diagram  

\[\begin{CD} 0 @> >> A_1 @>\theta >> A_2 @>\phi >> A @> >> 0\\
& & @V n VV @V n VV @V n VV\\
0 @> >> A_1 @>\theta >> A_2 @>\phi >> A @> >> 0\end{CD}\]
to give an exact sequence $0 \to A_1/nA_1 \xrightarrow{\theta/n} A_2/nA_2 \xrightarrow{\phi/n} A/nA \to 0$. It is then enough to note that the latter sequences are compatible (with respect to the natural projection maps) as $n$ varies and that, by the Mittag-Leffler criterion, exactness of the sequences is preserved when one passes to the inverse limit over $n$ since, for each multiple $m$ of $n$ the projection map $A_1/mA_1 \to A_1/nA_1$ is surjective. 

To prove claim (ii) we must show that if $x = (x_n)_n$ is an element of $\widehat{A}$ with the property that $px = 0$ for some prime $p$, then $x=0$. But, since $A$ is torsion-free, for each $n$ the element $x_{np}$ is the image in $A/(npA)$ of an element $\hat x_{np}$ of $nA$. Since $x_n$ is equal to the image of $\hat x_{np}$ in $A/(nA)$ one therefore has $x_n = 0$, as required.

Finally, we note that claim (iii) is both straightforward to prove directly and also follows immediately from the  general result \cite[Th. 15]{matlis} of Matlis (since $\widehat{A}^p$ is equal to the completion of the $\ZZ$-module $A$ at the ideal  generated by $p$). 
\end{proof} 

\subsubsection{}In the sequel we set $R_L^+ := \ZZ[\G_L^+]$ for each $L$ in $\Omega$ and consider the inverse limits 
\[ R^+ := \varprojlim_{L\in \Omega}R_L^+ \,\,\text{ and }\,\, \widehat{R^+} = \varprojlim_{L\in \Omega}\widehat{R_L^+}\]
where, in both cases, the transition morphisms are the natural projection maps. 

The following result is the main observation that we make in this section and will play a key role in the proof of Theorem \ref{main result}. 
 
\begin{proposition}\label{divisible prop} The diagonal map $X \to \prod_p\widehat{X}^p$, where $p$ runs over all primes, is injective. \end{proposition}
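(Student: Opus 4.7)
The plan is to reduce injectivity of the diagonal map $\iota: X \to \prod_p \widehat{X}^p$ to showing that $X$ has trivial maximal divisible subgroup, and then to kill divisibility via evaluation at each level $L \in \Omega$. Since $X$ is torsion-free by Lemma \ref{tf lemma}, Lemma \ref{pro-completion lemma}(i), applied to the short exact sequence $0 \to p^m X \to X \to X/p^m X \to 0$, identifies the kernel of each structure map $X \to \widehat{X}^p$ with $\bigcap_{m \geq 1} p^m X$. Hence $\ker(\iota) = \bigcap_{p,m} p^m X$, which by a standard coprimality argument (valid in any torsion-free abelian group) equals $\bigcap_{n \geq 1} nX$, the maximal divisible subgroup of $X$. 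It therefore suffices to show that $X$ contains no non-zero divisible element.

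For each $L \in \Omega$, set $\widetilde{C}_L := R_L^+ \cdot c_L^{1+\tau}$, the cyclic $R_L^+$-submodule of $(L^+)^\times$ generated by $c_L^{1+\tau}$, and consider the evaluation homomorphism $\pi_L: X \to (L^+)^\times/\widetilde{C}_L$ that sends the class of $y = r \cdot v^{1+\tau}$ to the class of $r_L v_L^{1+\tau}$ (this is well-defined because the image of $C = R \cdot c^{1+\tau}$ at $L$ lies in $\widetilde{C}_L$). Since $v$ satisfies the distribution relations of Definition \ref{es def}, the component $v_L$ lies in the finitely generated $S(L)$-unit group $U_L'$ (cf.\ the proof of Proposition \ref{useful 3}(iii)). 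By the same proposition, the image of $v_L^{1+\tau}$ in $\QQ \otimes_\ZZ L^\times$ lies in the $\QQ[\G_L^+]$-span of $c_L^{1+\tau}$, so $\pi_L(X)$ is a torsion subgroup of the finitely generated group $(U_L')^{1+\tau}/\widetilde{C}_L$, and hence finite. Any divisible $x \in X$ must therefore satisfy $\pi_L(x) = 0$ for every $L$: for every representative $y = r \cdot v^{1+\tau}$ of $x$ and every $L$ there exists $s_L \in R_L^+$ with $r_L v_L^{1+\tau} = s_L c_L^{1+\tau}$ in $L^\times$.

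The remaining step -- which I expect to be the principal technical obstacle -- is to upgrade this level-wise solubility to a single global section $s \in R^+$ satisfying $r \cdot v^{1+\tau} = s \cdot c^{1+\tau}$, which would identify $r \cdot v^{1+\tau}$ with an element of $C$ and so force $x = 0$ in $X$. Each set of local solutions is a non-empty coset of the annihilator ideal $I_L$, described explicitly by Proposition \ref{useful 3}(i) via the idempotent $e_L$, and the Euler system distribution relations satisfied by both $c$ and $v$ control the behaviour of these cosets under the natural projections $R_{L'}^+ \to R_L^+$ for $L \subset L'$. The strategy is to extract a compatible global element via a Mittag-Leffler-style argument in the non-Noetherian inverse system $\{R_L^+\}_{L \in \Omega}$; crucially, divisibility of $x$ itself provides extra input, since for each $n \geq 1$ the relation $y = n y^{(n)} + c^{(n)}$ in $Y+C$ yields a global identity $(r - n r^{(n)}) \cdot v^{1+\tau} = s^{(n)} \cdot c^{1+\tau}$ whose compatibility with the local data refines the inverse system. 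Carrying out this gluing while simultaneously respecting both the ideals $I_L$ and the Euler system compatibilities is where I expect the main difficulty of the proof to lie.
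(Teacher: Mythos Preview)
Your reduction is correct and essentially coincides with the paper's opening move: since $X$ is torsion-free, the kernel of the diagonal map is $\bigcap_n nX$, and the paper's Snake-Lemma diagram identifies this with $(\widehat{C}\cap (C+Y))/C$, which is the same as your condition ``$\pi_L(x)=0$ for all $L$, plus divisibility''. Your use of Proposition~\ref{useful 3}(iii) to prove that each $\pi_L(X)$ is finite is also exactly the argument the paper uses at the corresponding point.

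The genuine gap is the step you yourself flag as the principal obstacle, and your proposed framing of it makes the problem harder than it needs to be. By working only with the level-by-level cosets $s_L + I_L$, you have discarded the single piece of global data that makes the gluing tractable: a divisible element $x$ lifts to $y\in C+Y$ whose image in $(C+Y)^\wedge$ lies in $\widehat{C}$ (this is exactly what the Snake Lemma gives), and since $\widehat{C}\cong\widehat{R^+}$ via $c^{1+\tau}$, one obtains a \emph{single compatible} element $\lambda=(\lambda_L)_L\in\widehat{R^+}$ with $\lambda_L\equiv s_L\pmod{\widehat{I_L}}$ for every $L$. The problem then becomes the concrete identity
\[
\widehat{R^+}\cap \prod_{L\in\Omega}(R_L^+ + \widehat{I_L}) = R^+,
\]
and the paper resolves this by an induction on the number of prime divisors of $m(L)$: one decomposes $\lambda_F=\sum_E \varepsilon_E\lambda_E$ over subfields $E\subseteq F^+$ using the character idempotents, invokes the inductive hypothesis when $m(E)$ has fewer prime factors, and uses Proposition~\ref{useful 3}(i)--(ii) to show $e_\psi i_F=0$ for the remaining characters, forcing $\lambda_F\in\QQ[\G_F^+]$ and hence (via $\widehat{\ZZ}\cap\QQ=\ZZ$) $\lambda\in R^+$. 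Your ``Mittag--Leffler-style argument'' with Euler-system compatibilities and the auxiliary identities $y=ny^{(n)}+c^{(n)}$ is not wrong in spirit, but it does not supply this inductive mechanism, and without the coherent profinite $\lambda$ there is no evident way to run the induction at all.
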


\begin{proof} The Chinese Remainder Theorem implies that the natural 
map $\widehat{X} \to \prod_p \widehat{X}^p$ is injective and so it is enough to prove that this is also true of the natural map $\iota: X \to \widehat{X}$.

Recalling that $C$ and $Y$ respectively denote the modules $Rc^{1+\tau} = R^+c^{1+\tau}$ and $Rv^{1+\tau} = R^+v^{1+\tau}$, we consider the following exact commutative diagram
\[
\begin{CD} 0 @> >> C @> \subseteq >> C + Y @> \pi >> X @> >> 0\\
& & @V\iota_2 VV @V\iota_1 VV @V \iota VV \\
0 @> >> \widehat{C} @> \subseteq >> (C + Y)^\wedge @> \widehat{\pi} >> \widehat{X} @> >> 0.\end{CD}\]
The top row of this diagram is the tautological short exact sequence, all vertical maps are the natural maps and the lower row is the short exact sequence that is induced by applying Lemma \ref{pro-completion lemma}(i) to the upper row and recalling that $X$ is torsion-free (by Lemma \ref{tf lemma}). 

The map $\iota_1$, and hence also $\iota_2$, is injective. This follows easily from the equality $\ker(\iota_1) = \bigcap_{n\in \mathbb{N}}n(C+Y)$  and the fact that for every $x$ in $C+Y$ and every $L$ in $\Omega$ the component $x_L$ of $x$ at $L$ belongs to the finitely generated group $U_L'$ defined in the proof of Proposition \ref{useful 3}(iii). 

We use $\iota_2$ and $\iota_1$ to regard $C$ and $C+Y$ as subgroups of $\widehat{C}$ and $(C+Y)^\wedge$ respectively, and then apply the Snake Lemma to the above diagram to deduce that the kernel of $\iota$ is isomorphic to the quotient of $\widehat{C} \cap (C+Y)$ by $C$, where the intersection takes place in $(C+Y)^\wedge$. 

To prove the claimed result we are therefore reduced to proving an equality  
\begin{equation}\label{sufficient} \widehat{C} \cap (C+Y) = C.\end{equation}

To check this we note that the map $R^+ \to C$ sending each element $r$ to $(c^{1+\tau})^r$ is bijective (as a consequence of \cite[Th. 1.2]{yonsei}) and hence extends to give an isomorphism $\widehat{R^+} \cong \widehat{C}$ of $\widehat{R^+}$-modules. 

It follows that every  element of $\widehat{C}$ is of the form $(c^{1+\tau})^\lambda$ with $\lambda = (\lambda_L)_L$ in $\widehat{R^+}$ and if such an element belongs to $C+Y$, and hence to ${\rm ES}(\mathbb{G}_m)^{1+\tau}$, then Proposition \ref{useful 3}(iii) implies that for each $L$ there exists a natural number $n_L$ such that $((c_L^{1+\tau})^{\lambda_L})^{n_L}$ belongs to the $R^+_L$-module $C_L$ that is generated by $c_L^{1+\tau}$. Thus, since $(c_L^{1+\tau})^{\lambda_L}$ belongs to $\widehat{C_L}$ and the quotient 
$\widehat{C_L}/C_L \cong C_L\otimes_\ZZ (\widehat{\ZZ}/\ZZ)$ is torsion-free, it follows that $(c_L^{1+\tau})^{\lambda_L}$ belongs to $C_L$. Since the annihilator of $c^{1+\tau}_L$ in $\widehat{R^+_L} = \widehat{\ZZ}\otimes_\ZZ R_L^+$ is equal to $\widehat{I_L}$ (as $\widehat{\ZZ}$ is a flat $\ZZ$-module), there must therefore exist an element $r_L$ of $R^+_L$ such that $\lambda_L-r_L \in \widehat{I_L}$.

It therefore remains to show that
\begin{equation}\label{key equal} \widehat{R^+} \cap \prod_{L\in \Omega} (R^+_L + \widehat{I_L}) = R^+.\end{equation}

To prove this equality we regard both $\widehat{\ZZ} = \prod_\ell\ZZ_\ell$ and $\QQ$ as subgroups of $\prod_\ell\QQ_\ell$ (where the products are over all primes $\ell$) in the natural way and note that, with these identifications, one has $\widehat{\ZZ} \cap \QQ = \ZZ$. 
To justify (\ref{key equal}) it is therefore enough to show that if $\lambda = (\lambda_L)_L$ is any element of $\widehat{R^+}$ with the property that $\lambda_L \in R^+_L + \widehat{I_L}$ for every $L$ in $\Omega$, then in fact one has $\lambda_L \in \QQ[\G_L^+]$ for every $L$. To prove this we shall argue by induction on the number of prime factors of the finite part $m(L)$ of the conductor of $L$.

If, firstly, $m(L)$ is a prime power, then the idempotent $e_L$ is equal to $1$ so Proposition \ref{useful 3}(i) implies 
$I_L$ vanishes and hence the given assumptions imply that $\lambda_L$ belongs to $R^+_L$, and hence also to $\QQ[\G_L^+]$ as required.

Now assume to be given a natural number $n$ and suppose that for every field $L$ in $\Omega$ such that $m(L)$ is divisible by at most $n$ primes, one has $\lambda_L \in \QQ[\G_L^+]$. Fix a field $F$ in $\Omega$ such that $m(F)$ is divisible by $n+1$ primes. We write $\Xi$ for the set of homomorphisms $\G_F^+ \to \QQ^{c\times}$ and for each $\psi$ in $\Xi$ we write $F_\psi$ for the fixed field of $F^+$ under $\ker(\psi)$. We note that, for each subfield $E$ of $F^+$ the subset $\Xi(E)$ of $\Xi$ comprising all $\psi$ for which $F_\psi = E$ is a (possibly empty) conjugacy class for the natural action of $\G_{\QQ^c}$ on $\Xi$ and hence that the associated idempotent $\varepsilon_E := \sum_{\psi \in \Xi(E)}e_\psi$ belongs to $\QQ[\G^+_F]$. 

To investigate $\lambda_F$ we use the decomposition 
\begin{multline}\label{lambda-decomposition}
    \lambda_F = 1\cdot \lambda_F = \left(\sum_{\psi\in \Xi} e_\psi\right)\cdot\lambda_F = \sum_{\psi\in \Xi} e_\psi \lambda_F \\ = \sum_{\psi\in \Xi} e_\psi \lambda_{F_\psi} = \sum_{E}\left(\sum_{\psi\in \Xi(E)}e_\psi\lambda_E\right) = \sum_E \varepsilon_E\lambda_E,\end{multline}
where the fourth equality is valid since $\lambda$ belongs to $\widehat{R^+}$, and in the sum $E$ runs over all subfields of $F^+$. 

Fix a subfield $E$ of $F^+$. If $m(E)$ is divisible by fewer primes than $m(F)$ then, by hypothesis, one has that $\lambda_E \in \QQ[\G_E]$. On the other hand, if $m(E)$ is divisible by the same number of primes as $m(F)$, and $r_F \in R^+_F$ and $i_F \in \widehat{I_F}$ are such that $\lambda_F = r_F + i_F$, then one has
\[ \varepsilon_E \lambda_E = \sum_{\psi \in \Xi(E)}e_\psi\lambda_{E} =\sum_{\psi \in \Xi(E)}e_\psi\lambda_{F} = \sum_{\psi \in \Xi(E)}e_\psi(r_F + i_F) = \sum_{\psi \in \Xi(E)}e_\psi r_F = \varepsilon_E r_F,
\]
where the fourth equality is valid since, under the present hypothesis, 
each $\psi$ in $\Xi(E)$ is not trivial on the decomposition group of any prime divisor of $m(F)$ so that one has  
 $e_\psi = e_\psi e_F$ (by Proposition \ref{useful 3}(ii)) and hence also $e_\psi (i_F) = 0$ as a consequence of 
 Proposition \ref{useful 3}(i). 
 
These observations imply that the element $\varepsilon_E\lambda_E$ belongs to $\QQ[\G_F^+]$ for every subfield $E$ of $F^+$ and hence, via the decomposition (\ref{lambda-decomposition}), that $\lambda_F$ belongs to $\QQ[\G_F^+]$, as required to complete the proof of the Proposition.
\end{proof}

\subsection{Euler systems of prime level}\label{prime section} The results of Lemma \ref{injectivity} and Proposition \ref{divisible prop} combine to imply that Theorem \ref{main result} is true provided the group $\widehat{X}^p$ vanishes for every prime $p$. In this section we reinterpret the vanishing of $\widehat{X}^p$ in terms of an explicit restriction on the components of the system $v^{1+\tau}$ at fields containing $\QQ(p)$. 

To do this we fix a prime $p$ and write $\Omega(p)$ for the subset of $\Omega$ comprising fields that contain $\QQ(p)$. We then define the collection ${\rm ES}_{(p)}(\mathbb{G}_m)$ of `Euler systems of level $p$' to be the set of elements $x = (x_L)_{L\in \Omega(p)}$ that are defined just as in Definition \ref{es def} except that all occurrences of $\Omega$ are replaced by $\Omega(p)$, and we write 
\[ \varrho: {\rm ES}(\mathbb{G}_m)\to {\rm ES}_{(p)}(\mathbb{G}_m)\]
 for the `restriction' map that sends each $(x_L)_{L\in \Omega}$ in ${\rm ES}(\mathbb{G}_m)$ to $(x_L)_{L \in \Omega(p)}$. 

We also set 
\[ \mathcal{E} = \mathcal{E}(p) := {\rm ES}_{(p)}(\mathbb{G}_m)^{1+\tau},\]
write $\lambda: \mathcal{E} \to \mathcal{E}^{\wedge,p}$ for the natural map and then for each system $x$ in ${\rm ES}(\mathbb{G}_m)$ we define  
\[ x_{(p)} := \lambda(\varrho (x^{1+\tau})) \in \mathcal{E}^{\wedge,p}.\]

Finally we set 
\[ R^+_p := \varprojlim_{L\in \Omega(p)}\ZZ_p[\G^+_L],\]
where the transition morphisms $L \subset L'$ in the limit are the natural projection maps. 

\begin{proposition}\label{last} Write $\kappa$ for the inclusion of $R c_{(p)}\cap Rv_{(p)}$ into $Rv_{(p)}$. Then the following claims are valid. 

\begin{itemize}
\item[(i)] The $R$-module $X$ is isomorphic to ${\rm cok}(\kappa)$. 
\item[(ii)] The natural map ${\rm cok}(\widehat{\kappa}^{p}) \to {\rm cok}(\kappa)^{\wedge, p}$ is bijective.
\item[(iii)] For every element $x$ of $Rc + Rv$, the natural map $R^+_p x_{(p)}\to (Rx_{(p)})^{\wedge,p}$ is bijective. 
\item[(iv)] Define subgroups of $\mathcal{E}^{\wedge,p}$ by setting $Z := Rc_{(p)} + Rv_{(p)}$ and $Z_p := R^+_pc_{(p)} + 
R^+_pv_{(p)}$. Then, with respect to the identifications in claim (iii), the image of $\widehat{\kappa}^{p}$ is equal to the set of elements $y$ of $R^+_pv_{(p)}$ which have the same image as an element of $R^+_pc_{(p)}$ under the 
natural map $Z_p \to \widehat{Z}^p$.  
\item[(v)] The group $\widehat{X}^p$ vanishes if there exists an element of $R^+_pc_{(p)}$ that has the same image as  $v_{(p)}$ under the map $Z_p \to \widehat{Z}^p$ that occurs in claim (iv). 
\end{itemize} \end{proposition}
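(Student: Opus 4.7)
The plan is to prove (i)--(iv) in turn, and then to deduce (v) formally.

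For (i), I would study the $R$-linear composite $\Phi : Y \to Rv_{(p)} \to {\rm cok}(\kappa)$, $y \mapsto \phi(y) \bmod (Rc_{(p)} \cap Rv_{(p)})$, where $\phi := \lambda\circ\varrho$ and $Y = Rv^{1+\tau}$. It is visibly surjective and, since $\phi(Y\cap C) \subseteq Rc_{(p)}\cap Rv_{(p)}$, factors through $X = Y/(Y\cap C)$. Injectivity is where the work lies: given $y \in Y$ with $\phi(y) \in Rc_{(p)}\cap Rv_{(p)}$, I would write $\phi(y) = tc_{(p)}$ for some $t\in R$, so that $\phi(y - tc^{1+\tau}) = 0$. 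Since each $(L^\times)^{1+\tau}$ is torsion-free and so embeds into its pro-$p$ completion, $\lambda$ is injective on $\mathcal{E}$, whence the $L$-component of $y - tc^{1+\tau}$ must vanish for every $L\in\Omega(p)$. The main obstacle is then to deduce from this vanishing that $y - tc^{1+\tau}$ itself lies in $C$. Here I would argue by induction on the number of prime divisors of $m(L)$, in the spirit of the proof of (\ref{key equal}) in Proposition \ref{divisible prop}: combining the constraint supplied by Proposition \ref{useful 3}(iii), which confines $v^{1+\tau}_L$ to the $\QQ[\G^+_L]$-module generated by $c^{1+\tau}_L$, with the distribution relations (\ref{classical dr}) applied to extensions $L' = L\cdot \QQ(p) \supset L$ for $L\not\in\Omega(p)$, should force the integrality required to write $y - tc^{1+\tau}$ as an $R^+$-multiple of $c^{1+\tau}$.

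Claims (ii), (iii) and (iv) will then follow as comparatively routine applications of Lemma \ref{pro-completion lemma}. For (ii), the short exact sequence $0 \to Rc_{(p)}\cap Rv_{(p)} \to Rv_{(p)} \to {\rm cok}(\kappa) \to 0$ consists of torsion-free abelian groups -- the first two as subgroups of the torsion-free $\mathcal{E}^{\wedge,p}$ (Lemma \ref{pro-completion lemma}(ii)), and the third by (i) combined with Lemma \ref{tf lemma} -- so Lemma \ref{pro-completion lemma}(i) directly yields the asserted isomorphism ${\rm cok}(\widehat{\kappa}^p)\cong{\rm cok}(\kappa)^{\wedge,p}$. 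For (iii), I would use that $x_{(p)}$ is fixed by $\tau$ to identify $Rx_{(p)}$ with the cyclic quotient $R^+/\mathrm{Ann}_{R^+}(x_{(p)})$, and then apply Lemma \ref{pro-completion lemma}(i) to the corresponding short exact sequence (together with the identification of the inverse limit of pro-$p$ completions on the generator-side with $R^+_p$) to obtain $(Rx_{(p)})^{\wedge,p}\cong R^+_p x_{(p)}$. For (iv), an analogous application of Lemma \ref{pro-completion lemma}(i) to the short exact sequences built from the inclusions $Rc_{(p)}\cap Rv_{(p)}\hookrightarrow Rc_{(p)}$, $Rc_{(p)}\cap Rv_{(p)}\hookrightarrow Rv_{(p)}$, and $Rc_{(p)},Rv_{(p)}\hookrightarrow Z$ identifies $(Rc_{(p)}\cap Rv_{(p)})^{\wedge,p}$ with the intersection of $R^+_p c_{(p)}$ and $R^+_p v_{(p)}$ inside $\widehat{Z}^p$, from which the stated description of ${\rm im}(\widehat{\kappa}^p)$ follows at once.

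Claim (v) then falls out formally: by (i) and (ii), $\widehat{X}^p\cong{\rm cok}(\widehat{\kappa}^p)=(Rv_{(p)})^{\wedge,p}/{\rm im}(\widehat{\kappa}^p)$, and by (iii) this last group equals $R^+_p v_{(p)}/{\rm im}(\widehat{\kappa}^p)$. The hypothesis of (v) places $v_{(p)}$ itself in ${\rm im}(\widehat{\kappa}^p)$ via the description of (iv); since $v_{(p)}$ generates $R^+_p v_{(p)}$ over $R^+_p$ and the image is stable under the $R^+_p$-action, this forces ${\rm im}(\widehat{\kappa}^p) = R^+_p v_{(p)}$, so the cokernel vanishes. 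The hardest step in the whole proposition is thus the integrality claim underlying (i), which genuinely requires the interplay of Proposition \ref{useful 3} with the Euler system distribution relations.
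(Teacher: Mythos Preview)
Your overall structure is right, and parts (ii), (iv) and (v) are handled essentially as in the paper. There are, however, real issues in (i) and (iii).

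\textbf{Claim (i).} Your reduction is the same as the paper's: both set up the obvious map and note that injectivity amounts to showing that any $w\in{\rm ES}(\mathbb{G}_m)^{1+\tau}$ with $\varrho(w)=0$ lies in $C$ (equivalently, is trivial). The paper does \emph{not} attempt an inductive argument here; it invokes \cite[Lem.~2.1]{yonsei}, which asserts precisely that the restriction $\varrho$ is injective on such systems. Your proposed substitute, an induction on the number of primes dividing $m(L)$ ``in the spirit of (\ref{key equal})'', does not carry over: the distribution relation for $L'=L\cdot\QQ(p)\supset L$ yields $w_L=0$ when $p\mid m(L)$, but when $p\nmid m(L)$ it only gives $(1-\sigma_p)\,w_L=0$, a Galois-invariance condition that says nothing about integrality. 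Proposition~\ref{useful 3}(iii) places $w_L$ in $\QQ[\G_L^+]\cdot c_L^{1+\tau}$, but you have not explained how $\sigma_p$-invariance promotes rational to integral coefficients, nor how the character-by-character decomposition underlying (\ref{key equal}) would apply here. This is a genuine gap; you should either cite the result from \cite{yonsei} or supply a complete argument.

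\textbf{Claim (iii).} Your outline is correct but skips the only non-formal step. Applying Lemma~\ref{pro-completion lemma}(i) to $0\to J\to R^+\to Rx_{(p)}\to 0$ (with $J=\mathrm{Ann}_{R^+}(x_{(p)})$) gives $(Rx_{(p)})^{\wedge,p}\cong R_p^+/\widehat{J}^p$, whereas $R_p^+x_{(p)}\cong R_p^+/J_p$ with $J_p=\mathrm{Ann}_{R_p^+}(x_{(p)})=\varprojlim_L(\ZZ_p\otimes J_L)$. The identification you want therefore reduces to $\widehat{J}^p=J_p$ inside $R_p^+$, and this is \emph{not} automatic: $J$ is an inverse limit over $\Omega(p)$ of non-finitely-generated data, and pro-$p$ completion does not in general commute with such limits. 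The paper handles this by comparing the two exact sequences modulo $p^n$, using compactness of each $J_{L,p}$ to pass the quotient through the inverse limit, and then invoking Lemma~\ref{pro-completion lemma}(iii). Your parenthetical about ``the generator-side'' covers $(R^+)^{\wedge,p}\cong R_p^+$ but not this annihilator-side identification; you should add that argument.

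For (iv), note that the paper packages your several short exact sequences into the single sequence $0\to W\to Rc_{(p)}\oplus Rv_{(p)}\xrightarrow{(x,y)\mapsto x-y} Z\to 0$ and completes once; the content is the same.
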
 

\begin{proof} To prove claim (i) we note that the association $x \mapsto \lambda(\varrho(x))$ induces a well-defined homomorphism of $R$-modules $t$ from $X = (Rc^{1+\tau} + Rv^{1+\tau})/Rc^{1+\tau}$ to the quotient $Q$ of $Rc_{(p)} + Rv_{(p)}$ by $Rc_{(p)}$. Since $Q$ is naturally isomorphic to ${\rm cok}(\kappa)$, it is thus enough to show that this map $t$ is bijective. Since $t$ is clearly surjective it is therefore enough to show that if $z$ is any element of $Rc^{1+\tau} + Rv^{1+\tau}$ such that $\lambda(\varrho(z))$ belongs to $Rc_{(p)}$, then $z$ belongs to $Rc^{1+\tau}$. 

To prove this we note first that $\lambda$ is injective. This is true since for every $x$ in $\mathcal{E}$ the component  $x_L$ at each field $L$ in $\Omega(p)$ belongs to the finitely generated torsion-free abelian group $(U_L')^{1+\tau}$. (We note in passing that this observation also implies that the group $\mathcal{E}$, and hence, by Lemma \ref{pro-completion lemma}(ii), also $\mathcal{E}^{\wedge,p}$, is torsion-free). 

The injectivity of $\lambda$ implies that if $\lambda(\varrho(z)) = c_{(p)}^r = \lambda(\varrho(c^{1+\tau}))^r$ for some $r$ in $R$, then the system $zc^{-(1+\tau)r}$ belongs to both ${\rm ES}(\mathbb{G}_m)^{1+\tau}$ and $\ker(\varrho)$. Thus, after converting between the notions of Euler system and circular distribution (as per the discussion in \S\ref{intro}), we can apply the result of \cite[Lem. 2.1]{yonsei} (in which we take $\Sigma$ to be the set of multiples of $p$, and we note that the notion of `circular distribution of level $p$' in loc. cit. corresponds to our notion of Euler system of level $p$) in order to deduce that $zc^{-(1+\tau)r} = 1$. This equality in turn implies that $z = c^{(1+\tau)r}$ belongs to $Rc^{1+\tau}$, as required to prove claim (i). 

To prove claim (ii) we note that the isomorphism in claim (i) combines with Lemma \ref{tf lemma} to imply ${\rm cok}(\kappa)$ is torsion-free. Given this, the tautological exact sequence 
\[ 0 \to R c_{(p)}\cap Rv_{(p)} \xrightarrow{\kappa} Rv_{(p)} \to {\rm cok}(\kappa) \to 0\]
combines with Lemma \ref{pro-completion lemma}(i) to imply that the induced sequence 
\[ 0 \to (R c_{(p)}\cap Rv_{(p)})^{\wedge,p} \xrightarrow{\widehat{\kappa}^p} (Rv_{(p)})^{\wedge,p} \to {\rm cok}(\kappa)^{\wedge,p} \to 0\]
is exact, and this immediately implies the isomorphism in claim (ii). 

To prove claim (iii) we fix $x \in Rc + Rv$ and for $L$ in $\Omega(p)$ write $J_L$ for the annihilator of $x_L^+$ in $\ZZ[\G_L^+]$. Then an element $r = (r_L)_{L\in \Omega(p)}$ of $R^+ = \varprojlim_{L\in \Omega(p)}\ZZ[\G_L^+]$ annihilates $x_{(p)}$ if and only if $r_L \in J_L$ for every $L\in \Omega(p)$ and so the annihilator of $x_{(p)}$ in $R^+$ is equal to the ideal $J := \varprojlim_{L\in \Omega(p)}J_L$. Since the $R$-module generated by $x_{(p)}$ is torsion-free, Lemma \ref{pro-completion lemma}(i) gives rise to an exact sequence of $R^+_p$-modules 
\begin{equation}\label{seq1} 0 \to \widehat{J}^p \xrightarrow{\subset} R^+_p \to (Rx_{(p)})^{\wedge,p} \to 0\end{equation}
in which the third arrow sends $1$ to the image of $x_{(p)}$ in $(Rx_{(p)})^{\wedge,p}$. 

Next we note that, since $\ZZ_p$ is flat over $\ZZ$, for each $L$ in $\Omega(p)$ the annihilator in $\ZZ_p[\G^+_L]$ of the $L$-component of $x_{(p)}$ is equal to $J_{L,p} := \ZZ_p\otimes_\ZZ J_L$. This implies that there is an exact sequence of $R^+_p$-modules
\begin{equation}\label{seq2} 0 \to J_p\xrightarrow{\subset} R^+_p \to R^+_px_{(p)} \to 0\end{equation}
where we set $J_p := \varprojlim_{L\in \Omega(p)}J_{L,p}$ and the third arrow sends $1$ to $x_{(p)}$. 

Now the groups $(Rx_{(p)})^{\wedge,p} $ and $R^+_px_{(p)}$ are torsion-free and for each $n$ the natural map
\[ ((Rx_{(p)})^{\wedge,p})/p^n \to Rx_{(p)}/p^n = (R^+_px_{(p)})/p^n\]
is bijective. Hence, if we take cokernels under multiplication by $p^n$ of the sequences (\ref{seq1}) and (\ref{seq2}) we obtain an equality 
\[ \widehat{J}^p/p^n = J_p/p^n.\]
In addition, from the exactness of each sequence 
\[ 0 \to J_{L,p}\xrightarrow{p^n} J_{L,p} \to J_{L,p}/p^n\to 0,\]
and the compactness of each module $J_{L,p}$, 
one finds that 
\[ J_p/p^n = \varprojlim_{L\in \Omega(p)}(J_{L,p}/p^n).\] 

Upon combining these observations, one deduces that 
\[\widehat{J}^p = \varprojlim_n\bigl(\widehat{J}^p/p^n\bigr) = \varprojlim_n\bigl(\varprojlim_{L\in \Omega(p)}(J_{L,p}/p^n)\bigr) = \varprojlim_{L\in \Omega(p)}\bigl(\varprojlim_n (J_{L,p}/p^n)\bigr) = \varprojlim_{L\in \Omega(p)} J_{L,p} = J_p,\]
where the first equality is valid by Lemma \ref{pro-completion lemma}(iii) and the fourth since $J_{L,p}= (J_L)^{\wedge,p}$ as $J_L$ is finitely generated. 
%
%
Then, since $\widehat{J}^p = J_p$, the assertion of claim (iii) follows directly upon comparing the exact sequences (\ref{seq1}) and (\ref{seq2}). 

To prove claim (iv) we consider the submodule $W := Rc_{(p)}\cap Rv_{(p)}$ of $\mathcal{E}$ and use the exact sequence of 
$R$-modules
\[ 0 \to W \xrightarrow{z\mapsto (z,z)} Rc_{(p)} \oplus Rv_{(p)} \xrightarrow{\theta} Z \to 0\]
in which $\theta$ sends each element $(x,y)$ to $x-y$. Now, since $\mathcal{E}$, and hence also $Z$, is torsion-free this sequence combines with Lemma \ref{pro-completion lemma}(i) and the isomorphisms in claim (iii) to imply exactness of the row in the following commutative diagram 
%
%
\[ \xymatrix{ 0 \ar[r] & \widehat{W}^p \ar[drr]_{\widehat{\kappa}^p} \ar[rr]^{\hskip -0.4truein x\mapsto (x,x)} & & R_p^+c_{(p)} \oplus R_p^+v_{(p)} \ar[d]^{(x,y) \mapsto y} \ar[r]^{\hskip 0.4truein\widehat{\theta}^{p}} & \widehat{Z}^{p} \ar[r] &0\\
& & & R^+_pv_{(p)}}\]
This exact diagram leads directly to the explicit description of $\im(\widehat{\kappa}^{p})$ given in claim (iv). 

Finally, to verify claim (v), we note that claim (i) implies $\widehat{X}^p$ vanishes if ${\rm cok}(\kappa)^{\wedge, p}$ vanishes and hence therefore, by claim (ii), if the map $\widehat\kappa^p$ is surjective. Claim (v) is therefore true since  claim (iv) implies that $\widehat\kappa^p$ is surjective if there exists an element of $R^+_pc_{(p)}$ that has the same image as  $v_{(p)}$ under the natural map $Z_p \to \widehat{Z}^p$. 
  \end{proof} 

\subsection{Completion of the proof} In view of Lemma \ref{injectivity}, Proposition \ref{divisible prop} and Proposition \ref{last}(v), to prove Theorem \ref{main result} it is enough to show that, for every prime $p$, the restricted system $v_{(p)}$ belongs to $R^+_pc_{(p)}$. We shall now explain how the latter claim follows as a consequence of results in  \cite{yonsei}. 

As a first step we note that, after converting between the notions of circular distribution and Euler system (just as in the proof of Proposition \ref{last}(i)), the result of \cite[Th. 3.1]{yonsei} implies that for each field $L$ in $\Omega(p)$ there exists an element $r_L$ of $\ZZ_p[\G_L]$ such that $v_L = r_L (c_L)$ in $\ZZ_p\otimes_\ZZ U_L'$ and hence also 
\[ v_L^{1+\tau} = r^+_L (c^{1+\tau}_L)\]
in $\ZZ_p\otimes_\ZZ (U'_L)^{1+\tau}$, where $r_L^+$ denotes the projection of $r_L$ to $\ZZ_p[\G_L^+]$. 

Now, since the system $\tilde v := (v_L^{1+\tau})_{L\in \Omega(p)}$ both belongs to the group 
$\mathcal{E} = {\rm ES}_{(p)}(\mathbb{G}_m)^{1+\tau}$ discussed in \S\ref{prime section}, and also verifies the above displayed equality (for a suitable choice of element $r_L^+$ of $\ZZ_p[\G_L^+]$) for every $L$ in $\Omega(p)$, it defines an element of the group 
$\mathcal{V}^{\rm d}_p$ discussed in \cite[\S5.3.1]{yonsei}. Hence, as the result of \cite[Prop. 5.3(i)]{yonsei} (in which the algebra $R_p^+$ is denoted by $\Lambda_{(p)}$) implies that $\mathcal{V}_p^{\rm d}$ is a free module over $R_p^+$, with basis given by the element $\tilde c := (c_L^{1+\tau})_{L\in \Omega(p)}$ of $\mathcal{E}$, there exists an element $r_p = (r_{p,L})_{L \in \Omega(p)}$ of $R_p^+$ with the property that, for every $L$ in $\Omega(p)$, one has 
\begin{equation}\label{last eq} v_L^{1+\tau} = r_{p,L} (c^{1+\tau}_L)\end{equation}
in $\ZZ_p\otimes_\ZZ (U'_L)^{1+\tau} = \varprojlim_n\bigl((U'_L)^{1+\tau}/p^n(U'_L)^{1+\tau}\bigr)$. 

To interpret these equalities, we recall that the systems $v_{(p)}$ and $c_{(p)}$ are respectively defined to be the images of $\tilde v$ and $\tilde c$ under the canonical map $\lambda: \mathcal{E}\to \mathcal{E}^{\wedge,p}$. In particular, if for each $n$ we fix an element $x_n$ of $R^+$ that has the same image under the projection map $R^+ \to \ZZ[\G_L^+]/p^n$ as does $r_p$ under the projection map $R_p^+ \to \ZZ_p[\G_L^+]/p^n = \ZZ[\G_L^+]/p^n$, and we write $\lambda_n$ for the canonical map $\mathcal{E} \to \mathcal{E}/p^n\mathcal{E}$, then $r_p(c_{(p)})$ is equal to the element $\bigl( \lambda_n(x_{n+1} (\tilde c))\bigr)_n$ of 
$\mathcal{E}^{\wedge,p} = \varprojlim_n\bigl(\mathcal{E}/p^n\mathcal{E}\bigr)$. 

In terms of this notation, the equalities (\ref{last eq}) imply that for every $L$ in $\Omega(p)$, the $L$-components $v_L^{1+\tau}$ and 
 $x_{n+1}(c_L^{1+\tau})$ of $\tilde v$ and $x_{n+1}(\tilde c)$ differ by an element of $p^{n+1}(U'_L)^{1+\tau}$ and thus, by Lemma \ref{last lemma} below, that the systems $\tilde v$ and $x_{n+1}(\tilde c)$ themselves differ by an element of $p^n\mathcal{E}$. 
 
It follows that $\lambda_n(\tilde v) = \lambda_n(x_{n+1}(\tilde c))$ for every $n$, and hence that there is an equality of systems
\[ v_{(p)} = \lambda(\tilde v) = (\lambda_n(\tilde v))_n = \bigl(\lambda_n(x_{n+1}(\tilde c))\bigr)_n = r_p(c_{(p)}).\]
Since this equality implies that $v_{(p)}$ belongs to $R_p^+c_{(p)}$, it therefore completes the proof of Theorem \ref{main result}. 

\begin{lemma}\label{last lemma} Fix a system $\varepsilon = (\varepsilon_L)_{L \in \Omega(p)}$ in $\mathcal{E}$ and a natural number $n$ such that $\varepsilon_{L}$ is divisible by $p^{n+1}$ in $(U'_{L})^{1+\tau}$ for every $L$. Then $\varepsilon$ is divisible by $p^n$ in $\mathcal{E}$.  
\end{lemma}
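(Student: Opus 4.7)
\emph{Proof proposal.} The plan is to construct $\eta \in \mathcal{E}$ with $\eta^{p^n} = \varepsilon$, thereby exhibiting $\varepsilon$ as an element of $p^n\mathcal{E}$. The argument proceeds in three stages: extract a canonical $p^n$-th root of $\varepsilon_L$ locally at each $L \in \Omega(p)$ using torsion-freeness of $(U'_L)^{1+\tau}$; verify the Euler system distribution relations for the resulting system by a torsion-free cancellation; and finally lift this system to $\mathcal{E}$, a step for which the extra factor of $p$ in the hypothesis is essential.

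The first observation is that $(U'_L)^{1+\tau}$ is torsion-free for every $L \in \Omega(p)$: for $z \in L^\times$, the element $z^{1+\tau} = z\bar z$ is totally positive in $L^+$, and the only totally positive root of unity in any number field is $1$. Hence the hypothesis determines a unique $\xi_L \in (U'_L)^{1+\tau}$ with $\xi_L^{p^{n+1}} = \varepsilon_L$, and setting $\eta_L := \xi_L^p$ gives the unique $p^n$-th root of $\varepsilon_L$ in $(U'_L)^{1+\tau}$. To check that $\eta = (\eta_L)_L$ satisfies the Euler system norm-compatibility, one notes that for $L \subset L'$ in $\Omega(p)$ both $N_{L'/L}(\eta_{L'})$ and $\eta_L^{\prod_\ell(1-\sigma_\ell)}$ lie in $(U'_L)^{1+\tau}$ and have the same $p^n$-th power (namely $N_{L'/L}(\varepsilon_{L'}) = \varepsilon_L^{\prod_\ell(1-\sigma_\ell)}$, by the distribution relations for $\varepsilon$); torsion-freeness then forces them to coincide, so $\eta$ is an Euler system of level $p$ with values in the subgroups $(U'_L)^{1+\tau}$.

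The main obstacle, and the technical heart of the argument, is to show that $\eta$ actually lies in the subgroup $\mathcal{E} = {\rm ES}_{(p)}(\mathbb{G}_m)^{1+\tau}$, i.e., to exhibit an Euler system $u \in {\rm ES}_{(p)}(\mathbb{G}_m)$ with $u^{1+\tau} = \eta$. To this end, one writes $\varepsilon = w^{1+\tau}$ for some $w \in {\rm ES}_{(p)}(\mathbb{G}_m)$ and chooses lifts $y_L \in U'_L$ with $y_L^{1+\tau} = \xi_L$; the identity $w_L^{1+\tau} = y_L^{p^{n+1}(1+\tau)}$ then forces $w_L = y_L^{p^{n+1}} \alpha_L$ for some $\alpha_L \in \mu_L$, and the torsion-freeness of $L^\times/\mu_L$ combined with the distribution relations for $w$ yields $N_{L'/L}(y_{L'}) = y_L^{\prod_\ell(1-\sigma_\ell)} t_{L',L}$ with $t_{L',L} \in \mu_L$ satisfying $t_{L',L}^{p^{n+1}} = \alpha_L^{\prod_\ell(1-\sigma_\ell)}/N_{L'/L}(\alpha_{L'})$. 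The candidate system $u := (y_L^p)_L$ then satisfies $u_L^{p^n(1+\tau)} = \varepsilon_L$ independently of the choice of lifts, and fails the Euler system norm-compatibility only by the factor $t_{L',L}^p \in \mu_L$. The extra power of $p$ in the divisibility hypothesis (namely $p^{n+1}$-divisibility rather than $p^n$-divisibility) provides precisely the flexibility needed, via adjustment of the $y_L$ by suitable elements of $\mu_L$ (which changes $t$ by a coboundary of roots-of-unity cochains), to trivialize this obstruction and produce a genuine Euler system $u$. Once $u$ is in hand, $\eta = u^{1+\tau}$ lies in $\mathcal{E}$ and $\varepsilon = (u^{1+\tau})^{p^n} \in p^n\mathcal{E}$, completing the proof.
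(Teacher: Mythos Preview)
Your first two stages are correct and match the paper exactly: you extract the unique $(p^{n+1})$-th root $\xi_L$ of $\varepsilon_L$ in the torsion-free group $(U'_L)^{1+\tau}$, and you verify that the resulting system satisfies the distribution relations by cancelling $p^n$-th (or $p^{n+1}$-th) powers in a torsion-free group. This is precisely what the paper does (its $y_E$ is your $\xi_E$).

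The gap is in your third stage. Your cocycle/coboundary sketch is not a proof: you assert that the obstruction $t_{L',L}^p$ can be killed by adjusting the lifts $y_L$ by roots of unity, but you give no mechanism for making a coherent choice over the (infinite, non-filtered) diagram $\Omega(p)$, nor any reason why the extra factor of $p$ suffices for this. As written, the claim that ``the extra power of $p$ \ldots\ provides precisely the flexibility needed \ldots\ to trivialize this obstruction'' is an assertion, not an argument.

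More to the point, this entire stage is unnecessary, because you have overlooked a one-line observation that the paper exploits. You have already shown (implicitly, by the same torsion-freeness cancellation you use for $\eta$) that the system $\xi = (\xi_L)_{L\in\Omega(p)}$ itself satisfies the distribution relations, so $\xi \in {\rm ES}_{(p)}(\mathbb{G}_m)$. But each $\xi_L$ lies in $(U'_L)^{1+\tau}$ and is therefore fixed by $\tau$, whence $\xi^{1+\tau} = \xi^2$. Thus $\xi^2 \in \mathcal{E}$ \emph{automatically}: no lifting, no cocycles. From here the finish is immediate. If $p=2$ then $\varepsilon = (\xi^2)^{2^n}$ with $\xi^2\in\mathcal{E}$. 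If $p$ is odd then $\varepsilon^2 = (\xi^{2p})^{p^n}$ with $\xi^{2p}=(\xi^2)^p\in\mathcal{E}$, and since $2$ is a unit modulo $p^n$ one writes $1 = 2a + p^n b$ and concludes $\varepsilon = ((\xi^{2p})^a\varepsilon^b)^{p^n}\in p^n\mathcal{E}$. (Equivalently: for $p$ odd, $\xi = \varepsilon\cdot(\xi^2)^{-(p^{n+1}-1)/2}\in\mathcal{E}$, so $\eta=\xi^p\in\mathcal{E}$.) Either way, the ``main obstacle'' you identify dissolves once you notice that $\xi$ is already an Euler system whose square lies in $\mathcal{E}$.
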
 

\begin{proof} For each $E$ in $\Omega(p)$, the group $(U_E')^{1+\tau}$ is torsion-free and so the given hypotheses imply the existence of a unique element $y_E$ of $(U_E')^{1+\tau}$ with $\varepsilon_E = y_E^{p^{n+1}}$. For the same reason, the system $y := (y_E)_{E\in \Omega(p)}$ inherits from $\varepsilon$ the distribution relation (\ref{classical dr}) for all $E'$ and $E$ in $\Omega(p)$, and so belongs to ${\rm ES}_{(p)}(\mathbb{G}_m)$. The system $y^2 = y^{1+\tau}$ therefore belongs to $\mathcal{E}$. 

In particular, if $p=2$, then, since $\varepsilon_E = (y_E^{2})^{2^{n}}$ for every $E$ in $\Omega(p)$, the system $\varepsilon$ is equal to $(y^2)^{2^{n}}$ and so is divisible by $2^n$ in $\mathcal{E}$. 

Similarly, if $p$ is odd, then one has $\varepsilon^2_E = (y_E^{2})^{p^{n+1}}$ for every $E$ in $\Omega(p)$, so that the system $\varepsilon^2 = (y^{2p})^{p^{n}}$, and hence also the system $\varepsilon$ itself, is divisible by $p^n$ in $\mathcal{E}$. \end{proof}


\begin{thebibliography}{10}

\bibitem{yonsei} D. Burns, S. Seo,
\newblock On circular distributions and a conjecture of Coleman,
\newblock Isr. J. Math. {\bf 241} (2021) 343-393.

\bibitem{coates} J. H. Coates,
\newblock Elliptic curves with complex multiplication and Iwasawa theory,
\newblock Bull. London Math. Soc. {\bf 23} (1991) 321-350.

\bibitem{coleman} R. Coleman,
\newblock Division values in local fields,
\newblock Invent. Math. {\bf 53} (1979) 91-116.

\bibitem{coleman2} R. Coleman,
\newblock On an Archimedean characterization of the circular units,
\newblock J. reine Angew. Math. {\bf 356} (1985) 161-173.

\bibitem{greither} C. Greither,
\newblock Class groups of abelian fields and the main conjecture,
\newblock Ann. Inst. Fourier {\bf 42} (1992) 449-499.

\bibitem{kl} D. S. Kubert, S. Lang,
\newblock Modular Units,
\newblock Grund. math. Wiss. {\bf 244}, Springer-Verlag, New York, 1981.

\bibitem{matlis} E. Matlis, 
\newblock The Higher Properties of $R$-Sequences,
\newblock J. Algebra {\bf 50} (1978) 77-112.

\bibitem{MRkoly} B. Mazur, K. Rubin,
\newblock Kolyvagin systems,
\newblock Mem. Amer. Math. Soc. \textbf{799} (2004).

\bibitem{RL} K. Rubin,
\newblock The main conjecture. Appendix to: Cyclotomic Fields I and II, S. Lang,
\newblock Graduate Texts in Math. {\bf 121}, New York: Springer Verlag (1990) 397-419.

\bibitem{R} K. Rubin,
\newblock Euler systems,
\newblock Annals of Math. Studies \textbf{147}, Princeton Univ. Press, 2000.

\bibitem{Seo1} S. Seo,
\newblock Circular Distributions and Euler Systems,
\newblock J. Number Theory {\bf 88} (2001) 366-379.

\bibitem{Seo3} S. Seo,
\newblock A note on circular distributions,
\newblock Acta Arith. {\bf 114} (2004) 313-322.

\bibitem{Seo4} S. Seo,
\newblock Circular distributions of finite order,
\newblock Math. Res. Lett. {\bf 13} (2006) 1-14.

\bibitem{Seo5} S. Seo,
\newblock Truncated Euler systems,
\newblock J. reine angew. Math. {\bf 614} (2008), 53-71 

\bibitem{sinnott} W. Sinnott,
\newblock On the Stickelberger ideal and the circular units of a cyclotomic field, 
\newblock Ann. Math. {\bf 108} (1978) 107-134.

\bibitem{tate} J. Tate,
\newblock Les Conjectures de Stark sur les Fonctions $L$ d'Artin
en $s=0$ (notes par D.Bernardi et N. Schappacher),
\newblock Progress in Math., \textbf{47}, Birkh\"auser, Boston, 1984.
\end{thebibliography}
\end{document}